\documentclass[10 pt]{amsart}

\usepackage{tikz}
\usetikzlibrary{cd,decorations.pathmorphing}
\usepackage{mathtools}
\mathtoolsset{showonlyrefs} 
\usepackage[alphabetic]{amsrefs}

\newtheorem{thm}{Theorem}[section]
\newtheorem{lem}[thm]{Lemma}

\newtheorem{prop}[thm]{Proposition}
\newtheorem{lemma}[thm]{Lemma}

\theoremstyle{definition}
\newtheorem{defn}[thm]{Definition}
\newtheorem{definition}[thm]{Definition}
\newtheorem{ex}[thm]{Example}
\newtheorem{example}[thm]{Example}

\newtheorem*{notnonly}{Notation}

\theoremstyle{remark}  
\newtheorem{rem}[thm]{Remark}
\newtheorem{remark}[thm]{Remark}

\numberwithin{equation}{section}

\newcommand{\secref}[1]{Section~\textup{\ref{#1}}}

\newcommand{\defnref}[1]{Definition~\textup{\ref{#1}}}

\newcommand\id{\operatorname{id}}
\newcommand{\inv}{^{-1}}
\newcommand\set[1]{\{\,#1\,\}}
\newcommand\go{G^{(0)}}
\newcommand\ho{H^{(0)}}

\newcommand\lambdah{\lambda_{H}}
\newcommand\lambdag{\lambda_{G}}

\newcommand\tensor\otimes

\newcommand\sd[2]{S(#1,#2)}
\newcommand\ac[2]{#1\sdp #2}

\newcommand\sdp{\rtimes}
\newcommand\hsdg{\sd HG}

\newcommand\cca{\mathcal{C}_{c}(G*_{s}H,\AA)}

\newcommand\half{\frac12}

\newcommand\grho{\ho*_{r}G}

\newcommand\nugrho{\nu_{\grho}}

\newcommand\End{\operatorname{End}}
\newcommand{\BB}{\mathcal B}
\newcommand{\righttext}[1]{\quad\text{#1 }}

\renewcommand{\AA}{\mathcal A}
\newcommand{\EE}{\mathcal E}
\newcommand{\HH}{\mathcal H}

\newcommand{\KK}{\mathcal K}
\newcommand{\R}{\mathbb R}
\newcommand{\C}{\mathbb C}

%
\let\ipscriptstyle=\scriptscriptstyle
\def\lipsqueeze{{\mskip -3.0mu}}
\def\ripsqueeze{{\mskip -3.0mu}}
\def\ipcomma{\nobreak\mathrel{,}\nobreak}
\newbox\ipstrutbox
\setbox\ipstrutbox=\hbox{\vrule height8.5pt
width 0pt}
\def\ipstrut{\copy\ipstrutbox}
\def\lip#1<#2,#3>{\mathopen{\relax_{\ipstrut\ipscriptstyle{
#1}}\lipsqueeze
\langle} #2\ipcomma #3 \rangle}
\def\blip#1<#2,#3>{\mathopen{\relax_{\ipstrut
\ipscriptstyle{ #1}}\lipsqueeze\bigl\langle} #2\ipcomma #3 \bigr\rangle}
\def\rip#1<#2,#3>{\langle #2\ipcomma #3
\rangle_{\ripsqueeze\ipstrut\ipscriptstyle{#1}}}
\def\brip#1<#2,#3>{\bigl\langle #2\ipcomma #3
\bigr\rangle_{\ripsqueeze\ipstrut\ipscriptstyle{#1}}}
\def\angsqueeze{\mskip -6mu}
\def\smangsqueeze{\mskip -3.7mu}
\def\trip#1<#2,#3>{\langle\smangsqueeze\langle #2\ipcomma #3
\rangle\smangsqueeze\rangle_{\ripsqueeze\ipstrut\ipscriptstyle{#1}}}
\def\btrip#1<#2,#3>{\bigl\langle\angsqueeze\bigl\langle #2\ipcomma
#3
\bigr\rangle
\angsqueeze\bigr\rangle_{\ripsqueeze\ipstrut\ipscriptstyle{#1}}}
\def\tlip#1<#2,#3>{\mathopen{\relax_{\ipstrut\ipscriptstyle{
#1}}\lipsqueeze \langle\smangsqueeze\langle} #2\ipcomma #3
\rangle\smangsqueeze\rangle}
\def\btlip#1<#2,#3>{\mathopen{\relax_{\ipstrut\ipscriptstyle{
#1}}\lipsqueeze
\bigl\langle\angsqueeze\bigl\langle} #2\ipcomma #3
\bigr\rangle\angsqueeze\bigr\rangle}

\def\ip(#1|#2){(#1\mid #2)}
\def\bip(#1|#2){\bigl(#1 \mid #2\bigr)}
\def\Bip(#1|#2){\Bigl( #1 \bigm| #2 \Bigr)}
\def\Lip<#1,#2>{\lip\scriptstyle\star<#1,#2>}
\def\Rip<#1,#2>{\rip\scriptstyle\star<#1,#2>}

\newcommand\cs{\ensuremath{C^{*}}}
\newcommand{\ib}{im\-prim\-i\-tiv\-ity bi\-mod\-u\-le}

\newcommand{\sme}{\,\mathord{\mathop{\text{--}}\nolimits_{\relax}}\,}

\allowdisplaybreaks

\usepackage[shortlabels]{enumitem}

\newcommand{\lchs}{locally compact Hausdorff space}
\newcommand{\lcg}{locally compact groupoid}



\newcommand{\units}{^{(0)}}

\usepackage{amssymb}

\begin{document}
\title[Groupoid semidirect product Fell bundles I]{Groupoid semidirect
  product Fell bundles I ---\\ 
Actions by isomorphisms}

\author[Hall]{Lucas Hall}
\address{School of Mathematical and Statistical Sciences
\\Arizona State University
\\Tempe, Arizona 85287}
\email{lhall10@asu.edu}

\author[Kaliszewski]{S. Kaliszewski}
\address{School of Mathematical and Statistical Sciences
\\Arizona State University
\\Tempe, Arizona 85287}
\email{kaliszewski@asu.edu}

\author[Quigg]{John Quigg}
\address{School of Mathematical and Statistical Sciences
\\Arizona State University
\\Tempe, Arizona 85287}
\email{quigg@asu.edu}

\author[Williams]{Dana P. Williams}
\address{Department of Mathematics
\\Dartmouth College
\\Hanover, New Hampshire 03755}
\email{dana@math.dartmouth.edu}



\date{May 4, 2021}

\begin{abstract}
  Given an action of a groupoid by isomorphisms
  on a Fell bundle (over another groupoid), we
  form a semidirect-product 
  Fell bundle,
  and prove that its $C^*$-algebra is isomorphic to a crossed product.
\end{abstract}
\maketitle

\section{Introduction}

Groupoid \cs-algebras are a natural generalization of transformation
group \cs-algebras first studied by Effros and Hahn back in the 1960s
\cite{effhah:mams67}.  Since then the construction of \cs-algebras
from groupoids has gone through myriad generalizations including
groupoid crossed products modeled on crossed-product \cs-algebras and
many others that are all subsumed by Fell-bundle \cs-algebras.  In
\cite{kmqw1, kmqw2}, Muhly and the last three named authors consider
actions of a group $G$ on a Fell bundle $p:\AA\to H$ by automorphisms
and analyze the consequences for the group semidirect-product
$\AA\rtimes G\to H\rtimes G$. When the group action is specialized,
the authors deduce an equivalence theorem \cite{kmqw1}*{Theorem 3.1}
which unifies many imprimitivity theorems under the context of Fell
bundles and their semidirect products.

As suggested in \cite{kmqw1}, we want to extend some of the results
there to actions by groupoids.  In this note, we take up this task.
We let a groupoid $G$ act on another groupoid $H$ by groupoid
isomorphisms.  Then we can build a semidirect-product groupoid
$\sd HG$.  (We have purposely avoided the usual group notation
  for semidirect products,
  $H\rtimes G$, as this coincides with the usual notation for a
  transformation groupoid.) Given a Fell bundle $p:\AA\to H$ and an
action of $G$ on $\AA$ by isomorphisms covering the given action on
$H$, we can form a Fell bundle $q:\sd {\AA}G\to\sd HG$.  Our main
result is that the corresponding Fell-bundle \cs-algebra
$\cs(\sd HG,\sd {\AA}G)$ is isomorphic to a groupoid crossed product
$\cs(H,\AA)\rtimes_{\alpha}G$ for an action $\alpha$.  This leaves off
the task of extending the equivalence theorem of \cite{kmqw1} from
group to groupoid actions, which we accomplish in the forthcoming
article \cite{hkqwstab}.

We begin in \secref{sec:prelim} by recording our conventions for
locally compact groupoids, groupoid actions on spaces, action
groupoids, upper-semicontinuous Banach bundles, Fell bundles
$p:\AA\to G$ over a groupoid $G$, and pull-back Fell bundles.

In \secref{sec:bundl-prop-acti} we record our conventions for bundles
$(T,p,B)$, morphisms of bundles, and pull-backs.  In \secref{sec:act
  on bundle} we record our conventions for an action of a groupoid $G$
on a bundle $p:T\to B$, and in \secref{sec:actions} we apply this to
actions on Fell bundles over groupoids.

In \secref{sec:semidirect} we develop a notion of a semidirect product
$\sd HG$ of group\-oids $H$ and $G$ when $G$ acts on $H$ by
isomorphisms.  We then promote this to semidirect-product Fell bundles
where $G$ acts on a Fell bundle $p:\AA\to H$ by isomorphisms.

Finally, in \secref{sec:main} we prove that if $G$ acts on a Fell
bundle $p:\AA\to H$ by isomorphisms, then, under a mild assumption
involving Haar systems, the $C^*$-algebra $\cs(\sd HG,\sd \AA G)$ of
the semidirect-product Fell bundle is isomorphic to the crossed
product by an appropriate action of $G$ on the Fell-bundle algebra
$\cs(H,\AA)$.

A alternate version of Theorem~\ref{thm-cross-prod} could be
  derived from the unpublished preprint \cite{bm:xx16} of Buss and
  Meyer.  Their ``iterated-crossed-product decomposition'' result
  (Theorem~6.1 in their paper) would exhibit $\cs(\sd HG,\sd \AA G)$
  as the \cs-algebra of an abstract Fell bundle over $G$.  Because of
  the general nature of their result,
  considerable work would be required to show that their Fell
  bundle \cs-algebra is isomorphic to the groupoid crossed product
  $\cs(H,\AA)\rtimes_{\alpha}G$.  In particular, an extra hypotheses on
  the Haar systems is required to build the isomorphisms for the
  action $\alpha$ in the dynamical system.  Obtaining the crossed
  product decomposition in Theorem~\ref{thm-cross-prod} gives the
  advantage of more concrete realization of $\cs(\sd HG,\sd \AA G)$ as
  a crossed product rather than a more general Fell bundle
  \cs-algebra.  For example, crossed products can
  be studied using covariant representations as
  outlined in \cite{muhwil:nyjm}*{Section~7}.  Furthermore, our
  presentation is self-contained and does not require untangling, or
  reference to,  the
  impressive machine from \cite{bm:xx16}.

We also mention the work of Duwenig and Li involving Zappa-Sz\'ep
products for Fell bundles over \'etale groupoids
\cite{dlzappa}. Following Brownlowe \emph{et al}, they allow for two
groupoids to act on each other in a way that generalizes our semidirect
product. They then consider how the Zappa-Sz\'ep product of groupoids
lifts to an associated product for Fell bundles over an \'etale
groupoid. They thus recover and generalize our result, as long as one
considers the significantly restrictive case of \'etale groupoids. In
view of the present article, there is promise that the Duwenig-Li
construction generalizes to non-\'etale groupoids, but we do not
pursue this here.

\section{Preliminaries}\label{sec:prelim}

Throughout, $G$ and $H$ will be a second countable, locally compact Hausdorff
groupoids with Haar systems $\lambdag=\set{\lambdag^{u}}_{u\in \go}$
and $\lambdah=\set{\lambdah^{v}}_{v\in\ho}$, respectively.
In fact, when not obviously contradicted by context, the term
``groupoid'' means a locally compact Hausdorff groupoid with open
range and source maps.

\subsection{Open Maps}  Since open maps often play a key role in
the theory, it is useful to have a criterion for establishing that a
given map is open.   The following is called ``Fell's
Criterion'' in
\cite{wil:toolkit}*{Proposition~1.1} and comes from
\cite{felldoran}*{Proposition~II.13.2}. 

\begin{lem}[Fell's Criterion] \label{lem-fc}
  Suppose that $f:X\to Y$ is a surjection.  Then $f$ is open if and
  only if given a net $\set{y_{j}}_{i\in I}$ converging to $f(x)$ in
  $Y$, then there is a subnet $\set{y_{j}}_{j\in J}$ and a net
  $\set{x_{j}}_{j\in J}$ such that $x_{j}\to x$ and $f(x_{j})=y_{j}$.
\end{lem}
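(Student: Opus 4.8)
The plan is to prove the two implications separately, since the statement characterizes openness of the surjection $f$ by a net-lifting property.

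For the forward direction, assume $f$ is open and let $\{y_j\}_{j\in I}$ be a net with $y_j\to f(x)$. The key observation is that whenever $U$ is an open neighborhood of $x$, the image $f(U)$ is an open neighborhood of $f(x)$, so by convergence the net lies eventually inside $f(U)$ and hence each such $y_j$ admits a preimage in $U$. To package these local lifts into an honest subnet, I would index over the set $J$ of pairs $(j,U)$ with $j\in I$, $U$ an open neighborhood of $x$, and $y_j\in f(U)$, ordered by $(j,U)\le(j',U')$ iff $j\le j'$ and $U\supseteq U'$. This $J$ is directed: given two elements, take an upper bound $j''$ of their first coordinates and the intersection $U''$ of their neighborhoods, and then use $y_j\to f(x)$ together with openness of $f(U'')$ to find an index dominating $j''$ with $y_j\in f(U'')$. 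The projection $(j,U)\mapsto j$ is monotone and cofinal, so $\{y_j\}_{(j,U)\in J}$ is a subnet; choosing $x_{(j,U)}\in U$ with $f(x_{(j,U)})=y_j$ yields a lifting net, and this net converges to $x$ because the shrinking $U$-coordinate forces $x_{(j,U)}$ into every prescribed neighborhood of $x$.

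For the converse, assume the lifting property and take an open set $U\subseteq X$; I would show $f(U)$ is a neighborhood of each point $f(x)$ with $x\in U$, and argue by contradiction. If $f(U)$ is not a neighborhood of $f(x)$, then every neighborhood of $f(x)$ meets the complement $Y\setminus f(U)$, so there is a net $\{y_j\}$ in $Y\setminus f(U)$ with $y_j\to f(x)$. Applying the hypothesis produces a subnet and a lifting net $x_j\to x$ with $f(x_j)=y_j$; since $U$ is open and $x\in U$, eventually $x_j\in U$, whence $y_j=f(x_j)\in f(U)$, contradicting that the $y_j$ lie in the complement. Thus every point of $f(U)$ is interior and $f(U)$ is open.

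The main obstacle is entirely the bookkeeping in the forward direction: obtaining a genuine subnet (a monotone cofinal reindexing) rather than merely eventual lifts is precisely what forces the two-variable directed set $J$. The neighborhood coordinate $U$ is what guarantees that the lifting net converges to $x$, while the original-index coordinate $j$ is what makes the reindexing cofinal; keeping these two roles separate is the only delicate point. The converse is routine once one recalls that failure of openness at a point manifests as a net in the complement converging to that point.
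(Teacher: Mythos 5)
Your proof is correct, and since the paper states Fell's Criterion without proof (citing \cite{felldoran}*{Proposition~II.13.2} and \cite{wil:toolkit}*{Proposition~1.1}), the right comparison is with those references: your argument is essentially the standard one found there. In particular, the two-coordinate directed set of pairs $(j,U)$ with $y_j\in f(U)$, ordered by increasing index and shrinking neighborhood, is exactly the canonical device for producing a genuine (monotone, cofinal) subnet with convergent lifts, and your contradiction argument for the converse is the usual one.
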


We often use Fell's Criterion to ``lift'' nets as follows.  Suppose that $f:X\to
Y$ is an open surjection and $y_{i}\to f(x)$.  Then we can pass to a
subnet $\set{y_{i_{j}}}$ and find $x_{j}\to x$ with
$f(x_{j})=y_{i_{j}}$.  Typically, we dispense with the distracting
subnet notation and replace the original net with the subnet.  We
summarize this by saying that ``we can replace $\set{y_{i}}$ with a
subnet, relabel, and assume that are $x_{i}\to x$ with
$f(x_{i})=y_{i}$''.

Nets and subnets are discussed briefly at the end of
\cite{danacrossed}*{\S1.1}. 

\subsection{Groupoid Actions}
\label{sec:groupoid-actions}

Groupoid actions are treated in detail in \cite[\S2.1]{wil:toolkit}.
In particular, in order for $G$ to act on the left of a space $T$, we
need a continuous \emph{moment map}
$\rho_{T}:T\to\go$.  Then we require a a continuous
map from $G*T=\set{(x,t)\in G\times T:s_{G}(x)=\rho_{T}(t)}$ to $T$ such that
the usual axioms hold---see \cite[Definition~2.1]{wil:toolkit}.  This
means that $\rho_{T}(t)\cdot t=t$ for all $t\in T$,
$\rho_{T}(x\cdot t)=r_{G}(x)$, and $x\cdot (y\cdot t)=(xy)\cdot t$
whenever $s_{G}(y)=\rho_{T}(t)$ and $(x,y)\in G^{(2)}$.  In the
literature, it is often assumed that the moment map, $\rho_{T}$, is
open as well as continuous.  Here we will only make that assumption
when necessary. Except when it would be
confusing otherwise, it is common to delete the subscripts from
expressions such as $s_{G}(x)=\rho_{T}(t)$ and simply write
$s(x)=\rho(t)$ assuming that the meaning is clear from context.
Right actions are defined similarly except that the moment map is
usually denoted by $\sigma_{T}$ and then $x\cdot t$ is defined when
$r_{G}(x)=\sigma_{T}(t)$.

\begin{notnonly}
  We
employ the following standard notation for a $G$-space $T$ with moment
map $\rho_{T}:T\to\go$, $N\subset G$, and $S\subset T$:
\begin{enumerate}[(a),nosep]
\item
$N*S=\set{(x,t)\in N\times S:s(x)=\rho(t)}$;
\item
$N\cdot S=\set{x\cdot t:(x,t)\in N*S}$;
\item
$S*T
=\set{(t,u)\in S\times T:u\in G\cdot t}$.
\end{enumerate}
\end{notnonly}

\begin{defn}\label{def-act-grp}
Let a \lcg\ $G$ act on the left of a \lchs\ $T$.
Then the \emph{action groupoid}, $\ac GT$, 
is the space $G*T=\set{(x,t)\in G\times T:s(x)=\rho(t)}$ 
with operations given by
\[
(x,t)(y,s)=(xy,s)\text{\enspace if $t=y\cdot s$}\quad\text{and}\quad
(x,t)^{-1}=(x^{-1},x\cdot t).
\]
\end{defn}

Notice that the range and source maps are given by
\begin{align}
r(x,t)=(r(x),x\cdot t)\quad\text{and}\quad
s(x,t)=(s(x),t).
\end{align}
Then the unit space is
\[
(\ac GT)\units=\set{(u,t)\in G\units\times T:u=\rho(t)}.
\]
Since the coordinate projection
\[
\pi_2:(\ac GT)\units\to T
\]
is a homeomorphism, it is common practice to identify $(\ac GT)\units$
with $T$.  The action groupoid $\ac GT$ has open range and source maps
when $G$ does. 

\begin{remark}\label{rem-other}
  [Action Groupoids in the Literature] In the literature, the
  formulation of an action groupoids---also called transformation
  groupoids---is not consistent.  If $T$ is a left $G$-space, then in
  \cite{wil:toolkit}*{Definition~2.5} the action groupoid $A(G,T)$ is
  the set $\set{(t,x,t')\in T\times G\times T:t=x\cdot t'}$ of triples
  equipped with the relative product topology and with the natural
  groupoid operations: $(t,x,t')(t',y,t'')=(t,xy,t'')$ and
  $(t,x,t')^{-1}= (t',x^{-1},t)$.  Using triples makes the definition
  both natural and transparent.  It is also redundant.  As a result,
  it is common practice to denote elements of $A(G,T)$ more compactly
  as pairs in $G\times T$.  Our definition of $\ac GT$ above is pulled
  back from the homeomorphism $(x,t)\mapsto (x\cdot t, x, t)$ of $G*T$
  onto $A(G,T)$.  However, one can also consider the set
  $\set{(x,t):r(x)=\rho(t)}$ and pull-back a groupoid structure via
  the map $(x,t)\mapsto (x,t,x^{-1}\cdot t)$.  This groupoid structure
  has the advantage that if $G$ is a group, then we arrive at the
  usual formulas for transformation group algebras.  The convention
  here has the advantage that it is the same as used in
  \cite{kmqw2}*{Appendix~A.1}.  However, it differs, for example, from
  that used in \cite{wil:toolkit} or \cite{AR}. Of course, there are
  similar considerations for right actions.
\end{remark}

\subsection{Banach Bundles}
\begin{definition}
  \label{def-usc-bundle} An (upper-semicontinuous) \emph{Banach bundle} over a
  space $X$ is a topological space $\AA$ together with a continuous
  open surjection $p:\AA\to X$ and Banach space structures on the
  fibres $A(x)=p^{-1}(x)$ such that the following axioms
  hold.
  \begin{enumerate}[({B}1)]
  \item $a\mapsto \|a\|$ is upper-semicontinuous from $\AA$ to
    $\R^{+}$.
  \item $(a,b)\mapsto a+b$ is continuous from
    $\AA*\AA=\set{(a,b):p(a)=p(b)}$ to $\AA$.
  \item $(\lambda,a)\mapsto \lambda a$ is continuous from
    $\C\times \AA$ to $\AA$.
  \item If $\set{a_{i}}$ is a net in $\AA$ such that $p(a_{i})\to x$
    and $\|a_{i}\|\to 0$, then $a_{i}\to 0_{x}$ where $0_{x}$ is the
    zero element in $A(x)$.
  \end{enumerate}
  We use $A(x)$ to denote the fibre over $x$---rather
    than $\AA_{x}$, say---to emphasize that the fibre comes equipped
    with its own fixed Banach space structure.
\end{definition}

In in addition each fibre $A(x)$ is a \cs-algebra and $(a,b)\mapsto
ab$ and $a\mapsto a^{*}$ are continuous on $\AA*\AA$ and $\AA$,
respectively, then we call $\AA$ a (upper-semicontinuous) \cs-bundle.
If axiom~(B1) is replaced by ``$a\mapsto \|a\|$ is continuous'', then
we call $\AA$ a continuous Banach bundle or a continuous \cs-bundle.
Normally, we drop the adjective ``upper-semicontinuous'' and add
``continuous'' only when specializing to that case.

We use \S\S13--14 of
\cite{felldoran}*{Chap.~II} as a good reference for continuous Banach bundles.  For the general case, see
\cite{mw:fell}*{Appendix~A}, and for the \cs-case,
\cite{danacrossed}*{Appendix~C}.  A primary motivation for working
with general Banach bundles rather than continuous ones is that a
$C_{0}(X)$-algebra $A$ is always $C_{0}(X)$-isomorphic to
$\Gamma_{0}(X,\AA)$ for an appropriate (upper-semicontinuous) Banach
bundle $\AA$ \cite{danacrossed}*{Theorem~C.26}.

\subsection{Fell Bundles}

For Fell bundles we will refer to \cite{mw:fell}.
\begin{definition}[\cite{mw:fell}*{Definition~1.1}]
  \label{def-fell-bundle} Suppose that $p:\AA\to G$ is a separable
  Banach bundle over a second countable locally
  compact Hausdorff groupoid $G$.  Let
  $\AA^{(2)}=\set{(a,b)\in \AA\times\AA: (p(a),p(b))\in G^{(2)}}$.
  Then we call $\AA$ a \emph{Fell bundle} if there is a continuous,
  bilinear, associative multiplication map $(a,b)\mapsto ab$ from
  $\AA^{(2)}\to \AA$ and a continuous involution $a\mapsto a^{*}$ from
  $\AA\to\AA$ such that
  \begin{enumerate}[\rm ({FB}1)]
  \item\label{FB1} $p(ab)=p(a)p(b)$,
  \item\label{FB2} $p(a^{*})=p(a)^{-1}$,
  \item\label{FB3} $(ab)^{*}=b^{*}a^{*}$,
  \item\label{FB4} for each $u\in\go$, the Banach space fibre $A(u)$
    is a \cs-algebra with respect to the structure induced by the
    multiplication and involution on $\AA$, and
  \item\label{FB5} for each $x\in G$, the Banach space fibre $A(x)$ is
    an $A(r(x))\sme A(s(x))$-\ib\ when equipped with the actions
    determined by multiplication and the inner products
    \begin{equation}
      \label{eq:20}
      {}_{A(r(x))}\langle a,b\rangle=ab^{*}\quad\text{and}\quad
      \langle a,b\rangle_{A(s(x))}=a^{*}b.
    \end{equation}
  \end{enumerate}
\end{definition}

If $p:\AA\to H$ is a Fell bundle over a locally compact groupoid (and
we tacitly assume that our groupoids are Hausdorff), then to ease the
notational burden, for $a\in\AA$, we will write $r(a)$ in place of
$r_{H}(p(a))$ and similarly for $s(a)$.

\begin{remark}[Pull-Back Fell Bundles]
  \label{rem-pull-back-fell} Suppose that $p:\AA\to G$ is a Fell
  bundle and that $\phi:H\to G$ is a groupoid homomorphism.  Then the
  pull-back,
  \begin{equation}
    \label{eq:21}
    \phi^{*}\AA=\set{(h,a)\in H\times\AA:\phi(h)=p(a)}
  \end{equation}
  is a Fell bundle over $H$ with respect to $(h,a)(h',a')=(hh',aa')$
  and $(h,a)^{*}=(h^{-1},a^{*})$.
\end{remark}

\subsection{Fell Bundle Maps}

If $p:\AA\to H$ and $p':\BB\to G$ are Fell bundles.   Then a
\emph{Fell-bundle map} $\phi:\AA\to\BB$ is a bundle map
\begin{equation}
  \label{eq:17}
  \begin{tikzcd}
    \AA \arrow[d,"p",swap] \arrow[r,"\phi"] &\BB \arrow[d,"p'"] \\
    H\arrow[r,"\bar\phi",swap] & G
  \end{tikzcd}
\end{equation}
such that the induced map $\bar\phi$ is a groupoid homomorphism and
such that $\phi$ preserves multiplication and involution.   That is,
$(a,a')\in\AA^{(2)}$ implies $(\phi(a),\phi(a'))\in\BB^{(2)}$ and then
$\phi(ab)=\phi(a)\phi(b)$, and of course, we also require
$\phi(a^{*})=\phi(a)^{*}$.
Note that if $v\in\ho$, then $\phi$ induces a $*$-homomorphism of
$A(v)$ to $B(\bar\phi(v))$.   Then for all $a\in\AA$, we have
\begin{align}
  \label{eq:18}
  \|\phi(a)\|^{2}=\|\phi(a^{*}a)\|\le \|a^{*}a\|=\|a\|^{2},
\end{align}
and $\phi$ must be norm decreasing.  Therefore if $\phi$ is a
\emph{Fell-bundle isomorphism}---that is, if $\bar\phi$ is a groupoid
isomorphism and $\phi$ is a homeomorphism---then $\phi$ is
isometric.

\subsection{Fell Bundle \cs-Algebras}
If $p:\AA\to G$ is a Fell bundle and $G$ has a Haar system, then
$\Gamma_{c}(G,\AA)$ has a natural $*$-algebra structure:
\begin{equation}
  \label{eq:6}
  f*g(x)=\int_{G}f(y)g(y^{-1}x)\,d\lambdag^{r(x)}(y)\quad\text{and}
  \quad f^{*}(x)=f(x^{-1})^{*}.
\end{equation}
The corresponding \emph{Fell-bundle \cs-algebra} is the completion
$\cs(G,\AA)$ with respect to the universal norm on $\Gamma_{c}(G,\AA)$
induced by suitably bounded $*$-representations of $\Gamma_{c}(G,\AA)$
on Hilbert space as in \cite{mw:fell}*{\S1}.

\subsection{Groupoid Crossed Products}

In our main theorem, we will want to exhibit a Fell-bundle \cs-algebra
as the \cs-algebra of a groupoid crossed product.   We will refer to
\cite{muhwil:nyjm} for the basic theory, although Goehle's thesis
\cite{goe:thesis} is very good resource.   We recall the basics here.
\begin{definition}
  Suppose that $G$ is a locally compact groupoid and that $A$ is a
  $C_{0}(\go)$-algebra such that $A=\Gamma_{0}(\go,\AA)$ for a
  \cs-bundle over $\go$.  An \emph{action} $\alpha$ of $G$ on $A$ by
  $*$-isomorphisms  is a family $\set{\alpha_{x}}_{x\in G}$
  such that
  \begin{enumerate}
  \item for each $x\in G$, $\alpha_{x}:A(s(x))\to
    A(r(x))$ is an isomorphism,
  \item for all $(x,y)\in G^{(2)}$,
    $\alpha_{xy}=\alpha_{x} \circ \alpha_{y}$, and
  \item $x\cdot a=\alpha_{x}(a)$ defines a continuous action
    of $G$ on $\AA$.
  \end{enumerate}
The triple $(\AA,G,\alpha)$ is called a \emph{groupoid dynamical system}.
\end{definition}

Classically, the groupoid crossed product $A\rtimes_{\alpha}G$ is
built out of the sections $\Gamma_{c}(G,r^{*}\AA)$.   In the sequel,
we will use the observation from \cite[\S2]{mw:fell} that we can
realize $A\rtimes_{\alpha}G$ as the Fell-bundle \cs-algebra
$\cs(G,\BB)$ where $\BB=r^{*}\AA=\set{(a,x)\in\AA\times G:p(a)=r(x)}$
with multiplication given by $(a,x)(b,y)=(a\alpha_{x}(b),xy)$ and
involution by $(a,x)^{*}=(\alpha_{x}^{-1}(a),x^{-1})$.

\section{Bundles and Actions}
\label{sec:bundl-prop-acti}

Here we will use the term \emph{bundle} for a triple $(T,p,B)$
consisting of a continuous \emph{open} surjection
$p:T\to B$.
  We call $T$ the
\emph{total space} and $B$ the \emph{base space} of the bundle.
We are using a definition of ``bundle'' that is
  suitable for the context in which we work.  In \cite{husemoller},
  a bundle $p:T\to B$ is merely a continuous map.  We add
  the requirements that $p$ be open and surjective, but we do not
  require local triviality in any form.

If $T$ is a left $G$-space, then we call a bundle $(T,p,B)$ a
\emph{$G$-bundle} if $p(t)=p(t')$ if and only if $t$ and $t'$ are in
the same orbit.  Alternatively, $p:T\to B$ is a $G$-bundle if and only
if there is a homeomorphism $\bar p$ of $B$ with the orbit space
$G\backslash T$ such that the diagram
\begin{equation}
  \label{eq:1}
  \begin{tikzcd}
    &T\arrow[dl,"q",swap] \arrow[dr,"p"] \\
    G\backslash T\arrow[rr,"\bar p",swap]&& B
  \end{tikzcd}
\end{equation}
commutes.

Let $(T,p,B)$ and $(Y,q,C)$ be $G$-bundles.  Then a \emph{$G$-bundle
  morphism} $(f,g):(T,p,B)\to (Y,q,C)$ is a pair of continuous maps
such that $f$ is $G$-equivariant and such that the diagram
\begin{equation}
  \label{eq:2}
  \begin{tikzcd}
    T\arrow[r,"f"] \arrow[d,"p",swap] & Y \arrow[d,"q"] \\
    B\arrow[r,"g",swap] & C
  \end{tikzcd}
\end{equation}
commutes.  If $B=C$ and $g=\id_B$, we say $f:T\to Y$ is a $G$-bundle
morphism \emph{over $B$}.  In any event, we say that $(f,g)$ is an
\emph{isomorphism} if $f$ is a homeomorphism.

If $T$ and $U$ are $G$-spaces, then the fibred product
$T*_{r}U=\set{(t,u):\rho_{T}(t)=\rho_{U}(u)}$ admits a \emph{diagonal}
$G$-action: $x\cdot (t,u)=(x\cdot t, x\cdot u)$ with
$\rho(t,u)=\rho_{T}(t)=\rho_{U}(u)$.

\begin{lem}
  [Diagonal Actions] \label{diagonal} If $T$ and $U$ are $G$-spaces,
  then 
  the moment map 
  for the diagonal action of $G$
  on $T*_{r}U$
  is open if both $\rho_{T}$ and $\rho_{U}$ are.
\end{lem}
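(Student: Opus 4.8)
The plan is to verify openness of the diagonal moment map $\rho\colon T*_{r}U\to\go$ directly through Fell's Criterion (\lemref{lem-fc}), that is, by establishing the net-lifting property. So fix $(t,u)\in T*_{r}U$ and suppose $\set{v_{i}}_{i\in I}$ is a net in $\go$ with $v_{i}\to\rho(t,u)$. By definition of the diagonal action, $\rho(t,u)=\rho_{T}(t)=\rho_{U}(u)$, so the single convergence $v_{i}\to\rho(t,u)$ feeds both moment maps simultaneously. The goal is to produce a subnet together with a net $(t_{i},u_{i})\to(t,u)$ in $T*_{r}U$ satisfying $\rho(t_{i},u_{i})=v_{i}$.

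First I would lift in the $T$-coordinate. Since $\rho_{T}$ is open and $v_{i}\to\rho_{T}(t)$, Fell's Criterion lets us pass to a subnet, relabel, and assume there are $t_{i}\to t$ in $T$ with $\rho_{T}(t_{i})=v_{i}$. Next I would lift in the $U$-coordinate, applying Fell's Criterion a second time to $\rho_{U}$: because $\rho_{U}$ is open and the (relabeled) net still satisfies $v_{i}\to\rho_{U}(u)$, we may again pass to a subnet, relabel, and obtain $u_{i}\to u$ in $U$ with $\rho_{U}(u_{i})=v_{i}$. Passing to this second subnet does not disturb the first lift, since a subnet of the convergent net $t_{i}\to t$ still converges to $t$.

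With both lifts in hand, the pair $(t_{i},u_{i})$ lies in $T*_{r}U$ precisely because $\rho_{T}(t_{i})=v_{i}=\rho_{U}(u_{i})$, and then $(t_{i},u_{i})\to(t,u)$ in the fibred product while $\rho(t_{i},u_{i})=v_{i}$. This is exactly the lifting data required by \lemref{lem-fc}, so $\rho$ is open. (If one is uneasy about whether $\rho$ is surjective onto $\go$, the same argument applied to the corestriction of $\rho$ onto its image—itself open as a consequence of the lifts—gives the conclusion, since $\rho(t,u)$ always lies in that image.)

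The only genuine point of care, and the step I expect to be the main obstacle, is the bookkeeping with iterated subnets: one must ensure that the second application of Fell's Criterion to $\rho_{U}$ does not destroy the convergence $t_{i}\to t$ secured in the first step. This is resolved by the elementary fact that subnets of convergent nets converge to the same limit, so the two lifts can be carried out sequentially along a common refinement of the index net.
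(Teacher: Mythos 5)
Your proof is correct and follows essentially the same route as the paper's: two successive applications of Fell's Criterion, first lifting along $\rho_{T}$ and then along $\rho_{U}$, passing to subnets and relabeling each time. Your explicit remark that the second subnet passage preserves the convergence $t_{i}\to t$ is a small point the paper leaves implicit, but the argument is identical.
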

\begin{proof}
  We use Fell's Criterion
  (Lemma~\ref{lem-fc}): suppose $v_{i}\to \rho_{T*U}(t,u)$.  Since
  $\rho_{T}$ is open, we can pass to subnet, relabel, and assume that
  there are $t_{i}\to t$ with $\rho_{T}(t_{i})=v_{i}$.  Since $\rho_{U}$ is also
  open, we can pass to another subnet, relabel, and assume that there
  are $u_{i}\to u$ such that $\rho_{U}(u_{i})=v_{i}$.  But then
  $(t_{i},u_{i})\to (t,u)$.
\end{proof}

If $(T,p,B)$ is a bundle and $f:C\to B$ is continuous, then we can
form the \emph{pull-back} $f^{*}T=C{}_{f}*_{p}T=\set{(c,t):f(c)=p(t)}$
so that the diagram
\begin{equation}
  \label{eq:5}
  \begin{tikzcd}
    f^{*}T=C*T \arrow[d,"\pi_{1}",swap] \arrow[r,"\pi_{2}"] & T
    \arrow[d,"p"] \\
    C \arrow[r,"f",swap]& B
  \end{tikzcd}
\end{equation}
commutes.

\begin{lem}
  [Pull-Backs] \label{pullback} Let $(T,p,B)$ be a bundle and
  $f:C\to B$ a continuous map.  Then $(f^{*}T,\pi_{1},C)$ is a bundle
  where $\pi_{1}$ is the projection onto the first factor.  If $T$ is
  a $G$-bundle, then so is $f^{*}T$ where the $G$-action is given by
  $x\cdot (c,t)=(c,x\cdot t)$.  Furthermore, $f^{*}T$ 
  has an open moment map whenever $T$ does.
\end{lem}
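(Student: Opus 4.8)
The plan is to establish the three assertions in turn, using Fell's Criterion (\lemref{lem-fc}) for each openness claim and direct computation for the algebraic ones. The first two assertions are routine, so I would dispatch them quickly and concentrate on the openness of the moment map in the last assertion, which is where the real content lies.

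First I would check that $(f^{*}T,\pi_{1},C)$ is a bundle. Continuity of $\pi_{1}$ is immediate, and surjectivity follows from that of $p$: for $c\in C$ choose $t$ with $p(t)=f(c)$, so $(c,t)\in f^{*}T$ and $\pi_{1}(c,t)=c$. For openness I would apply Fell's Criterion to $\pi_{1}$. Given $c_{i}\to\pi_{1}(c,t)=c$, continuity of $f$ yields $f(c_{i})\to f(c)=p(t)$; since $p$ is open, I pass to a subnet, relabel, and obtain $t_{i}\to t$ with $p(t_{i})=f(c_{i})$. Then $(c_{i},t_{i})\in f^{*}T$, $(c_{i},t_{i})\to(c,t)$, and $\pi_{1}(c_{i},t_{i})=c_{i}$, as required. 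I note that this step uses only that $p$ is open, not any hypothesis on $f$.

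Next I would treat the $G$-bundle assertion. The formula $x\cdot(c,t)=(c,x\cdot t)$ is well defined because $T$ is a $G$-bundle: $x\cdot t$ lies in the same orbit as $t$, so $p(x\cdot t)=p(t)=f(c)$ and hence $(c,x\cdot t)\in f^{*}T$. The action axioms and joint continuity are inherited coordinatewise from the $G$-action on $T$, and the moment map is $\rho_{f^{*}T}(c,t)=\rho_{T}(t)$. For the orbit condition, $\pi_{1}(c,t)=\pi_{1}(c',t')$ holds exactly when $c=c'$; and when $c=c'$ we have $p(t)=f(c)=f(c')=p(t')$, so, $T$ being a $G$-bundle, $t$ and $t'$ lie in a common orbit, whence so do $(c,t)$ and $(c',t')$. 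The reverse implication is immediate since the action fixes the first coordinate.

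Finally, for the moment map I would invoke Fell's Criterion once more, applied to $\rho_{f^{*}T}=\rho_{T}\circ\pi_{2}$. Given $v_{i}\to\rho_{f^{*}T}(c,t)=\rho_{T}(t)$, openness of $\rho_{T}$ lets me pass to a subnet, relabel, and obtain $t_{i}\to t$ with $\rho_{T}(t_{i})=v_{i}$. It then remains to produce $c_{i}\to c$ with $f(c_{i})=p(t_{i})$, so that $(c_{i},t_{i})\in f^{*}T$ converges to $(c,t)$ with $\rho_{f^{*}T}(c_{i},t_{i})=v_{i}$. The hard part will be precisely this: the $\rho_{T}$-lift controls only the total-space coordinate $t_{i}$, and since $p(t_{i})\to f(c)$ one must still lift $p(t_{i})$ back through $f$ to secure convergent base coordinates. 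This is the single genuinely constrained step, and it is where an openness property of $f$ (again in the net-lifting form of Fell's Criterion) has to be brought to bear; the openness of $p$ and the preceding algebra are of no help for it. Once that lift is available, only the routine bookkeeping of the nested subnets remains.
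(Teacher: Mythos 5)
Your proposal is correct and follows the paper's proof essentially verbatim: Fell's Criterion for the openness of $\pi_1$, the coordinatewise argument for the $G$-bundle property, and a double net-lift (first through $\rho_T$, then through $f$) for the openness of the moment map. Your instinct that an openness hypothesis on $f$ is indispensable at the final step is exactly right and is vindicated by the paper itself, whose proof says ``since we are now assuming that $f$ is open'' even though the lemma statement omits this hypothesis --- and the claim genuinely fails without it: take $G=\go=\R$ viewed as a groupoid of units, $T=B=\R$ with $p=\rho_T=\id$, and $f$ the inclusion of a one-point space $C$ at $0\in B$, so that $\rho_{f^{*}T}$ has image $\{0\}$, which is not open in $\go$.
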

\begin{proof}
  To see that $\pi_{1}$ is open, we use Fell's Criterion.  Suppose
  that $c_{i}\to \pi_{1}(c,t)$.  Then $f(c_{i})\to f(c)=p(t)$.  Since
  $p$ is open, we can pass to subnet, relabel, and assume that there
  are $t_{i}\to t$ in $T$ with $p(t_{i})=f(c_{i})$.  But then
  $(c_{i},t_{i})\to (c,t)$ in $f^{*}(T)$.  Hence the pull-back is
  always a bundle over $C$.

  Now suppose that $T$ is a $G$-bundle.  If $(c,t)$ and $(c,t')$ both
  belong to $f^{*}(T)$, then $p(t)=f(c)=p(t')$ and there is a $x\in G$
  such that $t'=x\cdot t$.  Then $(c,t')=x \cdot (c,t)$.  Since the
  converse is clear, $\pi_{1}:f^{*}T\to C$ is a $G$-bundle.

  Now suppose that $\rho_{T}:T\to\go$ is open.  To see that
  $\rho_{f^{*}T}$ is open, we suppose that
  $v_{i}\to v=r(c,t)=\rho_{T}(t)$.  Since $\rho_{T}$ is open, after
  passing to a subnet and relabeling, we can assume that there are
  $t_{i}\to t$ with $\rho_{T}(t_{i})=v_{i}$.  Then
  $p(t_{i})\to p(t)=f(c)$ in $B$.  Since we are now assuming that $f$
  is open, we can pass to a subnet, relabel, and assume that there are
  $c_{i}\to c$ in $C$ such that $f(c_{i})=p(t_{i})$.  Then
  $(c_{i},t_{i})\to (c,t)$ as required.
\end{proof}

Note that if $(T,p,B)$ is a $G$-bundle and $f:C\to B$ is a continuous
map, then $(\pi_{2},f):(f^{*}T,\pi_{1},C)\to (T,p,B)$ is bundle
morphism.

\section{Actions on bundles}\label{sec:act on bundle}

Everything in this section is surely standard, although we could not
find it in the literature in the precise context we need; we include
the details for convenient reference. 

  \begin{defn}\label{act on bundle}
  Let $G$ be a locally compact groupoid, and let $p:T\to B$ be a bundle.
  We say that \emph{$G$ acts on the bundle $p:T\to B$} if $G$ acts on
  both the total space $T$ and the base space $B$ and the bundle map
  is equivariant.  In particular, $p$ intertwines the moment maps:
  $\rho_{B}\circ p=\rho_{T}$.
\end{defn}

\begin{rem}
Let $p:T\to B$ be a bundle and $C\subset B$ a closed subset,
and put $T|_C=p\inv(C)$.
Then we have a restricted bundle
\[
p|_C:T|_C\to C.
\]
The only non-obvious property to check is that the restriction $p|_C$
is open: let $c_i\to p(t)$ in $C$.  Since $p:T\to B$ is open, after
replacing by a subnet and relabeling we can find $t_i\in p\inv(c_i)$
such that $t_i\to t$ in $T$.  Then in particular $t_i\in p_u\inv(c_i)$
and $t_i\to t$ in $T|_C=p\inv(C)$.
\end{rem}

\begin{rem}
Let $G$ act on a bundle $p:T\to B$,
and
for $u\in G\units$ let $B_u=\rho_B\inv(u)$.
Then for all $x\in G$ we have a bundle isomorphism
\begin{equation}
  \label{eq:8}
  \begin{tikzcd}[column sep=2cm]
    T|_{B_{s(x)}} \arrow[d,"p",swap] \arrow[r,"t\mapsto x\cdot t"] &
    T|_{B_{r(x)}} \arrow[d,"p"] \\
    B_{s(x)}\arrow[r,"b\mapsto x\cdot b",swap] & B_{r(x)}.
  \end{tikzcd}
\end{equation}
In particular, the upper horizontal map is a homeomorphism between the
respective subsets of $T$.
\end{rem}

\section{Actions by isomorphisms}
\label{sec:actions}

In this section we adapt some of \cite[Appendix~A]{kmqw2} to groupoid actions.


\begin{defn}
A \emph{groupoid bundle} is a bundle $p:H\to B$ in which $B$ is a
locally compact space,
$H$ is a groupoid, and
each fibre $H_b:=p\inv(b)$ is a subgroupoid of $H$.
\end{defn}

\begin{rem}
  \label{rem-sat} If $p:H\to B$ is a groupoid bundle, then $h\in
  p^{-1}(b)$ implies $r(h)=hh^{-1}\in p^{-1}(b)$.  Thus $h\in H_{b}$
  if and only if $p(r(h))=b$.   Thus if $p_{0}=p|_{\ho}$, then
  $p_{0}\circ r=p_{0}\circ s$ and $H_{b}$ is
  the reduction $H|_{p_{0}^{-1}(b)}=p_{0}^{-1}(b)Hp_{b}^{-1}(b)$ and
  $H$ is a bundle of groupoids 
  as in \cite{wil:toolkit}*{Definition~1.16}.
\end{rem}

\begin{defn}\label{act on groupoid}
Let $H$ be a locally compact groupoid, and let $G$ act on the space $H$.
We say $G$ \emph{acts by isomorphisms} if the moment
map $\rho_{H}:H\to G\units$ is a groupoid bundle and
for each $x\in G$ the map
\[
t\mapsto x\cdot t:H_{s(x)}\to H_{r(x)}
\]
is a groupoid isomorphism.  
\end{defn}

\begin{remark}
  In the unpublished preprint \cite{bm:xx16}*{Definition~2.12}, 
  actions by isomorphisms are called ``classical actions''. 
\end{remark}

\begin{rem}\label{rem-g-ho}
  If $G$ acts on $H$ by isomorphisms, then $H^{(0)}$ is a
  $G$-invariant subset.   Moreover, $r_{H}(x\cdot h)=x\cdot
  r_{H}(h)$.  
\end{rem}

\begin{ex}\label{ex-g-act}
  Let $H=\set{(x,y,z)\in G\times G\times G: \text{$s(x)=r(y)$ and
      $z=yx$}}$ be the action groupoid for the
  right action of $G$ on itself, with operations given by $(x,y,z)(z,y',x')=(x,yy',x')$ and
  $(x,y,z)^{-1}= (z,y^{-1},x)$.  Then $G$ acts on the left
  of $H$: $w\cdot (x,y,xy)=(wx,y,wxy)$.  Let
  $\rho_{H}(x,y,z)=r(x)$.  Then
  $H_{u}=\set{(x,y,xy)\in H:r(x)=u}=G^{u}\ltimes G$ is easily seen to
  be a subgroupoid of $H$ and $\rho_{H}$ is a groupoid
  bundle.
  Furthermore,
  \begin{align}
    \label{eq:9}
    w\cdot \bigl((x,y,xy)(xy,z,xyz)\bigr)= w\cdot (x,yz,xyz)=(wx,yz,wxyz),
  \end{align}
and on the other hand,
\begin{align}
  \label{eq:10}
  \bigl(w\cdot (x,y,xy)\bigr)\bigl(w\cdot (xy,z,xyz)\bigr) =
  (wx,y,wxy)(wxy,z,wxyz)=(wx,yz,wxyz) .
\end{align}
Hence $G$ acts on $H$ by isomorphisms.  
\end{ex}

\begin{defn}
  [cf., \cite{kmqw1}*{Definition~6.3}] \label{def-invariant}Suppose
  that $H$ has a Haar 
  system $\lambdah=\set{\lambdah^{v}}_{v\in\ho}$.   Then we say that
  an action of $G$ on $H$ by isomorphisms is \emph{invariant} if
  \begin{equation}
    \label{eq:15}
    \int_{H} f(x\cdot h)\,d\lambdah^{v}(h) = \int_{H}
    f(h)\,d\lambdah^{x\cdot v}(h)
  \end{equation}
  for all $f\in C_{c}(H)$,
      $x\in G$, and $v\in\ho$.
\end{defn}

\begin{remark}
  \label{rem-equiv} While it might be more appropriate to describe
  actions satisfying \eqref{eq:15} as ``equivariant'', the term
  ``invariant'' 
  is more common in the literature (see
  \cite{wil:toolkit}*{Remark~3.12}).  In any event, if we view $H$ and
  $\ho$ as $G$-spaces, then the invariance of the
  $G$-action translates to
  saying that $\lambdah$ is a ``full invariant $r_{H}$-system'' as defined
  in \cite{wil:toolkit}*{\S3.2}.   
\end{remark}


Now we promote the preceding to Fell bundles.

\begin{definition}\label{def-fell-act-iso}
  Let a locally compact groupoid $G$ act on a Fell bundle $p:\AA\to H$
  in the sense of \defnref{act on bundle} with respective moment maps
  $\rho_\AA$ and $\rho_H$.  We say that \emph{$G$ acts on $\AA$ by
    isomorphisms} if
  \begin{enumerate}[(a)]
  \item $G$ acts on the groupoid $H$ by isomorphisms, and
  \item for each $x\in vGu$ the map
    \[
      a\mapsto x\cdot a:\AA|_{H_u}\to \AA|_{H_v}
    \]
    is a Fell-bundle isomorphism.
  \end{enumerate}
\end{definition}
Note that we have a restricted Fell bundle $p_u:\AA|_{H_u}\to H_u$ for
every $u\in G\units$.  

\begin{example}
  \label{ex-g-act-fell}
  As in Example~\ref{ex-g-act}, let $H$ be the action groupoid for the
  right action of $G$ on itself.  Let $p:\AA\to G$ be a Fell bundle.
  Then $\phi(x,y,xy)=y$ is a homomorphism and we can form the
  pull-back Fell bundle $\phi^{*}\AA=\set{(x,y,xy,a):y=p(a)}$.  Then
  $G$ acts on $\phi^{*}\AA$ by isomorphisms where
  $z\cdot (x,y,xy,a)=(zx,y,zxy,a)$ (which clearly covers the action of
  $G$ on $H$ by isomorphisms given in Example~\ref{ex-g-act}).
\end{example}

\section{Semidirect products}\label{sec:semidirect}

When $G$ and $H$ are groups and $G$ acts on $H$ via automorphisms,
then we can construct their semidirect-product $\sd HG$
containing $H$ as a normal subgroup. In this section we develop an analogue of this
for groupoids.

\begin{definition}\label{def-sd-product}
  Suppose that $G$ and $H$ are locally compact groupoids such that $G$
  acts on $H$ by isomorphisms.  Then the \emph{semidirect-product
    groupoid} is the set
  $\sd HG=\set{(h,x)\in H\times G:\rho(h)=r(x)}$, where we declare
  $(h,x)$ and $(k,y)$ to be composable if $s(h)=x\cdot r(k)$.  Then we
  define
  \begin{align}
    \label{eq:37}
    (h,x)(k,y)=(h(x\cdot k),xy)\quad\text{and}\quad
    (h,x)^{-1}=(x^{-1}\cdot h^{-1},x^{-1}).
  \end{align}
\end{definition}

If $s(h)=x\cdot r(k)$, then $s(h)=r(x)$ and 
$s(x)=\rho(r(k))=\rho(k)=r(y)$.  Hence $x\cdot k$ is defined,
$(h,x\cdot k)\in H^{(2)}$ and $(x,y)\in G^{(2)}$.  So the product in
\eqref{eq:37} makes sense.  Then routine computations---say following
\cite{wil:toolkit}*{Definition~1.2}---show that $\sd HG$ is a groupoid with
\begin{align}
  \label{eq:38}
  s(h,x)=(x^{-1}\cdot
          s_{H}(h),s_{G}(x))\quad\text{and}\quad 
  r(h,x)=(r_{H}(h),r_{G}(x)).
\end{align}
In particular,
\begin{equation}
  \label{eq:39}
  \sd HG^{(0)}=\set{(v,u)\in \ho\times\go:\rho_{H}(v)=u},
\end{equation}
and the coordinate projection $\pi_{1}:\sd HG\units \to \ho$ is a
homeomorphism.

\begin{example}\label{ex-sd-space}
  Suppose that $G$ acts on the left of a \emph{space} $H$.  Then
  $G$ acts on $H$ by isomorphisms when $H$ is viewed as a
  groupoid.  Then the map $(h,x)\mapsto (x,x^{-1}h)$ is a groupoid
  isomorphism of the semidirect product $\sd HG$ onto the
  action groupoid $G\rtimes H$.
\end{example}

\begin{remark}
  Although their notation is slightly different, in \cite{bm:xx16}
  Buss and Meyer introduce semidirect products calling them
  ``transformation groupoids for classical actions of $G$ on $H$''.
  If $G$ and $H$ have Haar systems and $G$ acts on $H$ by
  isomorphisms, then Buss and Meyer show in
  \cite{bm:xx16}*{Theorem~5.1} that $\hsdg$ always has a Haar system.
  Since we want the additional structure of a dynamical system, we
  have to require that, in addition, $H$ have an invariant Haar
  system.  Since this makes constructing a Haar system on the
  semidirect product routine,  we give the
  elementary construction that is required for our
  purposes.
  
For \'etale groupoids, Brownlowe \emph{et al}~\cite{bprrwzappa}
  introduced Zappa-Sz\'ep products of groupoids,
  which generalize our semidirect products in that context.
\end{remark}

\begin{lem}\label{lem-haar-sdp} Suppose that $G$ acts on $H$ by
  isomorphisms and that $H$ has an invariant Haar system
  $\lambdah=\set{\lambdah^{v}}_{v\in\ho}$ so that
  \[
    \int_H f(x\cdot t)\,d\lambdah^v(t)=\int_H f(t)\,d\lambdah^{x\cdot
      v}(t).
  \]
  Then we get a Haar system on the semidirect product $\sd HG$ by
  \[
    d\lambda^{(v,w)}(t,x)=d\lambda_H^v(t)\,d\lambda_G^w(x).
  \]
\end{lem}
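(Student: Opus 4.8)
The plan is to verify directly the three defining properties of a Haar system for the family $\set{\lambda^{(v,w)}}$ indexed by the units $(v,w)\in(\sd HG)\units$: that each $\lambda^{(v,w)}$ is a positive Radon measure supported on the range fibre $r\inv(v,w)$, that $(v,w)\mapsto\int f\,d\lambda^{(v,w)}$ is continuous for every $f\in C_c(\sd HG)$, and that the family is left invariant. First I would pin down the range fibres. Since $\sd HG$ is cut out of $H\times G$ by the continuous equation $\rho_H(h)=r_G(x)$ into a Hausdorff space, it is closed in $H\times G$; and by \eqref{eq:38} a unit $(v,w)$ (so $\rho_H(v)=w$) has $r\inv(v,w)=H^v\times G^w$, because any $(k,y)$ with $k\in H^v$ and $y\in G^w$ satisfies $\rho_H(k)=\rho_H(v)=w=r_G(y)$ and so lies in $\sd HG$. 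As $\lambda_H^v$ is supported on $H^v$ and $\lambda_G^w$ on $G^w$, the product $\lambda_H^v\times\lambda_G^w$ is a positive Radon measure on $H\times G$ supported on $H^v\times G^w\subseteq\sd HG$; restricting it to $\sd HG$ gives $\lambda^{(v,w)}$ with exactly the required support. Moreover, extending any $f\in C_c(\sd HG)$ to $\tilde f\in C_c(H\times G)$ (possible since $\sd HG$ is closed), the value $\int f\,d\lambda^{(v,w)}=\iint\tilde f(t,x)\,d\lambda_H^v(t)\,d\lambda_G^w(x)$ is independent of the extension, since the product measure only sees $H^v\times G^w$, where $\tilde f=f$.

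For continuity I would invoke the standard fact that integration of a $C_c$-function against a Haar system produces a $C_c$-function, applied with a parameter: for $\tilde f\in C_c(H\times G)$ the map $(v,x)\mapsto\int_H\tilde f(t,x)\,d\lambda_H^v(t)$ lies in $C_c(\ho\times G)$, and integrating this against $\lambda_G$ then shows $(v,w)\mapsto\int_G\bigl(\int_H\tilde f(t,x)\,d\lambda_H^v(t)\bigr)\,d\lambda_G^w(x)$ is continuous. By Fubini for the product of these Radon measures the iterated integral equals $\int f\,d\lambda^{(v,w)}$, giving the continuity axiom.

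The heart of the argument is left invariance, and this is where the invariance hypothesis on $\lambda_H$ is used in an essential way. Fix $\gamma=(h,x)\in\sd HG$, so that $s(\gamma)=(x\inv\cdot s_H(h),\,s_G(x))$ and $r(\gamma)=(r_H(h),\,r_G(x))$. Using Fubini together with $(h,x)(k,y)=(h(x\cdot k),xy)$, for $f\in C_c(\sd HG)$ I would write
\[
\int f(\gamma\eta)\,d\lambda^{s(\gamma)}(\eta)=\int_G\int_H f\bigl(h(x\cdot k),\,xy\bigr)\,d\lambda_H^{x\inv\cdot s_H(h)}(k)\,d\lambda_G^{s_G(x)}(y),
\]
and then transform the inner integral (with $y$ held fixed) in two steps. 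Applying the $G$-invariance of $\lambda_H$ with the substitution $t=x\cdot k$, and noting $x\cdot(x\inv\cdot s_H(h))=s_H(h)$, the inner integral becomes $\int_H f(ht,xy)\,d\lambda_H^{s_H(h)}(t)$; applying next the ordinary left invariance of the Haar system $\lambda_H$ to absorb the left translation by $h$ turns this into $\int_H f(t,xy)\,d\lambda_H^{r_H(h)}(t)$. Substituting back and applying the left invariance of the Haar system $\lambda_G$ to the outer integral (replacing $xy$ by $z$ and $\lambda_G^{s_G(x)}$ by $\lambda_G^{r_G(x)}$) gives
\[
\int_G\int_H f(t,z)\,d\lambda_H^{r_H(h)}(t)\,d\lambda_G^{r_G(x)}(z)=\int f\,d\lambda^{r(\gamma)},
\]
which is the desired identity.

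I expect the main obstacle to be the bookkeeping inside that inner integral: one must confirm that all the actions and products are defined — in particular $x\cdot(x\inv\cdot s_H(h))=s_H(h)$, which relies on $\rho_H(s_H(h))=\rho_H(h)=r_G(x)$ via \remref{rem-sat} and the definition of $\sd HG$ — and must invoke the two genuinely different invariance properties of $\lambda_H$, its $G$-equivariance and its intrinsic left invariance, in the correct order while keeping $y$ fixed. By contrast, positivity, the Radon property, and continuity are comparatively routine consequences of the corresponding facts for $\lambda_H$ and $\lambda_G$.
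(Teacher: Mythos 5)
Your proposal is correct and takes essentially the same route as the paper: the paper's proof consists of exactly your invariance computation --- apply the $G$-invariance of $\lambda_H$ to convert $\lambda_H^{x\inv\cdot s(t)}$ into $\lambda_H^{s(t)}$, then the left invariance of $\lambda_H$, then the left invariance of $\lambda_G$ --- carried out on elementary tensors $f\otimes g$ with $f\in C_c(H)$, $g\in C_c(G)$, which suffices by density. Your extra work (identifying the range fibres $r\inv(v,w)=H^v\times G^w$, the Tietze-extension argument, and the continuity of $(v,w)\mapsto\lambda^{(v,w)}(f)$) fills in the routine axioms the paper leaves implicit, and it is sound.
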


\begin{proof}
  It suffices to integrate over $\hsdg$ functions of the form
  $f\otimes g$ for $f\in C_c(H)$ and $g\in C_c(G)$:
  \begin{align*}
    &\int_{\hsdg} (f\otimes g)\bigl((t,x)(u,y)\bigr)\,d\lambda^{s(t,x)}(u,y)
    \\&\quad=\int_G \int_H (f\otimes g)(t(x\cdot
    u),xy)\,d\lambda^{(x\inv\cdot s(t),s(x))}(u,y) 
    \\&\quad=\int_Hf(t(x\cdot u))\,d\lambda_H^{x\inv\cdot s(t)}(u)
    \int_Gg(xy)\,d\lambda^{s(x)}(y)
    \\&\quad=\int_Hf(tu)\,d\lambda_H^{s(t)}(u)
    \int_Gg(y)\,d\lambda^{r(x)}(y)\righttext{(invariance)}
    \\&\quad=\int_Hf(u)\,d\lambda_H^{r(t)}(u)
    \int_Gg(y)\,d\lambda^{r(x)}(y)
    \\&\quad=\int_{\hsdg} (f\otimes g)(u,y)\,d\lambda^{r(t,x)}(u,y).
    \qedhere
  \end{align*}
\end{proof}


Suppose that $p:\AA\to H$ is a Fell bundle and that $G$ acts on $\AA$
by isomorphisms.  We get a Banach bundle over $\hsdg$
via the pull-back
$\pi_{1}^{*}\AA=\set{(a,h,x)\in \AA\times \sd HG:p(a)=h}$.  As
usual, we adopt a more compact notation and write elements of this
pull-back as pairs $(a,x)$ in $\AA\times G$ such that
$\rho(p(a))=r(x)$.  Then we define the Fell-bundle operations by
\begin{align}
  \label{eq:40}
  (a,x)(b,y)=\bigl(a(x\cdot b),xy\bigr)\quad\text{and}\quad (a,x)^{*}=
  (x^{-1}\cdot a^{*},x^{-1}).
\end{align}
We denote the pull-back with these operations by $\sd\AA G$ and write
$p'$ for the projection $p'(a,x)=(p(a),x)$ onto
$\hsdg$.
Then we can generalize the constructions in \cite[Section~6]{kmqw1} as
follows.

\begin{lemma}
  With the operations defined above, $p':\sd \AA G\to \hsdg$
  is a Fell bundle.
\end{lemma}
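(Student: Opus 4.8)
The plan is to separate the topological content from the algebraic content. As a topological Banach bundle over $\hsdg$, the space $\sd\AA G$ is by construction the pull-back $\pi_1^*\AA$ of $\AA$ along the continuous map $\pi_1\colon\hsdg\to H$, $(h,x)\mapsto h$; since pull-backs of separable (upper-semicontinuous) Banach bundles along continuous maps are again separable Banach bundles, and since $\hsdg$ is a second countable locally compact Hausdorff groupoid with open range and source maps (\secref{sec:semidirect}), the fibres $A(h)$ over $(h,x)$ already carry their Banach-space structure. It therefore remains only to check that \eqref{eq:40} defines a continuous, bilinear, associative multiplication and a continuous involution satisfying (FB1)--(FB5) of \defnref{def-fell-bundle}. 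The one thing to keep in mind is that $\pi_1$ is \emph{not} a homomorphism, so none of the Fell structure is inherited from \remref{rem-pull-back-fell}; the twisting by the action must be tracked by hand throughout.

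First I would record that the operations are well defined. If $(p(a),x)$ and $(p(b),y)$ are composable in $\hsdg$, then $s(x)=\rho(p(b))$, so $x\cdot b$ is defined and lies over $x\cdot p(b)$, and composability gives $(p(a),p(x\cdot b))\in H^{(2)}$, whence $a(x\cdot b)$ makes sense and $(a(x\cdot b),xy)\in\sd\AA G$. A parallel check, using that $\rho$ is constant on the $H$-fibres (\remref{rem-sat}), shows $(x^{-1}\cdot a^*,x^{-1})\in\sd\AA G$. By inspection $p'$ intertwines \eqref{eq:40} with the groupoid operations \eqref{eq:37}, which is exactly (FB1) and (FB2). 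Bilinearity is immediate because each $x\cdot{}$ is linear on fibres and multiplication in $\AA$ is bilinear. Associativity and (FB3) then reduce to the corresponding identities in $\AA$ together with the two defining properties of an action by isomorphisms (\defnref{def-fell-act-iso}): the cocycle law $x\cdot(y\cdot c)=(xy)\cdot c$ and the fact that each $x\cdot{}$ is a Fell-bundle isomorphism, hence multiplicative, $*$-preserving, and the identity over units. For instance $x\cdot\bigl(b(y\cdot c)\bigr)=(x\cdot b)\bigl((xy)\cdot c\bigr)$ collapses both bracketings of a triple product to $\bigl(a(x\cdot b)((xy)\cdot c),\,xyz\bigr)$.

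Continuity of the multiplication and of the involution then follows by writing each as a composite of continuous maps: the multiplication and involution on $\AA$, inversion and multiplication on $G$, and---crucially---the jointly continuous action map $(x,a)\mapsto x\cdot a$, whose continuity is part of \defnref{def-fell-act-iso}.

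The substance lies in (FB4) and (FB5). Over a unit $(v,u)$ of $\hsdg$, identified with $v\in\ho$ via \eqref{eq:39}, the unit $u$ acts as the identity, so \eqref{eq:40} collapses to $(a,u)(b,u)=(ab,u)$ and $(a,u)^*=(a^*,u)$; the unit fibre is therefore the $\cs$-algebra $A(v)$, giving (FB4). For (FB5), a short computation with \eqref{eq:40} and \eqref{eq:38} gives, for $a,b$ in the fibre over $(h,x)$, the identities $(a,x)(b,x)^*=(ab^*,r(x))$ and $(a,x)^*(b,x)=\bigl(x^{-1}\cdot(a^*b),s(x)\bigr)$. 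The first lies in the unit fibre $A(r(h))$ over $r(h,x)$ and reproduces the left inner product ${}_{A(r(h))}\langle a,b\rangle=ab^*$ of \eqref{eq:20}; the second lies in the unit fibre $A(x^{-1}\cdot s(h))$ over $s(h,x)$ and is the image of the right inner product $\langle a,b\rangle_{A(s(h))}=a^*b$ under the isomorphism $x^{-1}\cdot{}$. Thus the fibre over $(h,x)$ is exactly the $A(r(h))\sme A(s(h))$-\ib\ $A(h)$ of $\AA$, but with its right coefficient algebra transported along the $\cs$-isomorphism $x^{-1}\cdot\colon A(s(h))\to A(x^{-1}\cdot s(h))$ (equivalently, with $A(x^{-1}\cdot s(h))$ acting through $x\cdot{}$). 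I expect this transport to be the only step that is not pure bookkeeping, and it is settled by the standard fact that pushing the right action and right inner product of an \ib\ forward along a $\cs$-isomorphism again yields an \ib. This gives (FB5) and completes the verification.
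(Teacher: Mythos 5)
Your proposal is correct and matches the paper's own argument in essentially every respect: the Banach-bundle structure comes from the pull-back $\pi_1^*\AA$, (FB1)--(FB3) are direct computations using the cocycle law and multiplicativity of the action, (FB4) follows since units act trivially, and the key subtlety for (FB5) is, exactly as in the paper, that the right coefficient algebra $A(s(h,x))$ must be identified with $A(s(h))$ via the isomorphism $(a,s(x))\mapsto x\cdot a$ implementing the action. Your explicit computations $(a,x)(b,x)^*=(ab^*,r(x))$ and $(a,x)^*(b,x)=\bigl(x^{-1}\cdot(a^*b),s(x)\bigr)$ are precisely the ones the paper records, so nothing further is needed.
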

\begin{proof}[Proof]
  
  We have the candidates for multiplication and involution defined
  above, so let's proceed with checking \defnref{def-fell-bundle} with
  respect to these operations.  It is routine to verify that the
    multiplication is continuous, bilinear, and associative, and the
    involution is continuous.  Notice that $p$ respects all of the
  operations on $\AA$ and is also equivariant for the $G$
  action. Moreover, $G$ acts on $\AA$ by isomorphisms.
  
  For the multiplication condition, see that
  \begin{align*}
    p'\bigl((a,x)(b,y)\bigr)
    &=p'\bigl(a(x\cdot b),xy\bigr)
    \\&=\Bigl(p\bigl(a(x\cdot b)\bigr),xy\Bigr)
    \\&=\bigl(p(a)p(x\cdot b),xy\bigr)
    \\&=\Bigl(p(a)\bigl(x\cdot p(b)\bigr),xy\Bigr)
    \\&=\bigl(p(a),x\bigr)\bigl(p(b),y\bigr)
    \\&=p'(a,x)p'(b,y).
  \end{align*}

  For the compatibility of the projection with involution,
  \begin{align*}
    p'\bigl((a,x)^*\bigr)
    &=p'(x\inv\cdot a^*,x\inv)
    \\&=\bigl(p(x\inv\cdot a^*),x\inv\bigr)
    \\&=\bigl(x\inv\cdot p(a)^*,x\inv\bigr)
    \\&=\bigl(p(a),x\bigr)^*
    \\&=p'(a,x)^*.
  \end{align*}

  Next, we check that the involution on $\sd \AA G$ is, in fact,
  involutive:
  \begin{align*}
    \bigl((a,x)(b,y)\bigr)^*
    &=\bigl(a(x\cdot b),xy\bigr)^*
    \\&=\Bigl((xy)\inv\cdot \bigl(a(x\cdot b)\bigr)^*,(xy)\inv\Bigr)
    \\&=\Bigl(y\inv x\inv\cdot \bigl((x\cdot b)^*a^*\bigr),y\inv x\inv\Bigr)
    \\&=\Bigl(\bigl(y\inv x\inv\cdot (x\cdot b^*)\bigr)
    \bigl(y\inv x\inv\cdot a^*\bigr),y\inv x\inv\Bigr)
    \\&=\bigl((y\inv\cdot b^*)(y\inv x\inv\cdot a^*),y\inv x\inv\bigr)
    \\&=\Bigl((y\inv\cdot b^*)\bigl(y\inv\cdot (x\inv\cdot a^*)\bigr),
    y\inv x\inv\Bigr)
    \\&=(y\inv\cdot b^*,y\inv)(x\inv\cdot a^*,x\inv)
    \\&=(b,y)^*(a,x)^*.
  \end{align*}

  Before turning to the final conditions, notice that the fibre over
  $(h,x)\in \sd HG$ can be identified as a Banach space with
  $A(h)$ via $(a,x)\mapsto a$, and $A(h)$ is, by assumption, an
  $A(r(h))\sme A(s(h))$-\ib\ with
  respect to the operations inherited from $\AA$. Likewise, the fibre
  over a unit $(u,v)\in \sd HG^{(0)}$ is identified with the
  \cs-algebra $A(u)$.  The operations on $A(u,v)$ correspond to
    those of $A(u)$, so $A(u,v)$ is a \cs-algebra.  Similarly, on the
    left-hand side the module and inner product operations on $A(h,x)$
    correspond to those on $A(h)$.  On the right-hand side, there is a
    subtlety: the module and inner product on $A(h,x)$ correspond to
    those on $A(h)$ provided that the $C^*$-algebra $A(s(h,x))$ is
    identified with $A(s(h))$ via the map $(a,s(x))\mapsto x\cdot a$.
    More precisely,
    \begin{align*}
      (a,x)\cdot (b,u)=\bigl(a(x\cdot b),x\bigr),
    \end{align*}
    which, after $(b,u)$ is sent to $x\cdot b$, corresponds to
    $a\cdot c=ac$ for $a\in A(h),c\in A(s(h))$, and
    \begin{align*}
      (a,x)^*(b,x)=\bigl(x\inv\cdot (a^*b),s(x)\bigr),
    \end{align*}
    which would be sent to the element $a^*b\in A(s(h))$.  
  \end{proof}
  
  \begin{rem}
    Duwenig and Li~\cite{dlzappa}*{Theorem~3.8}
  studied Zappa-Sz\'ep products of Fell bundles over \'etale groupoids,
  which generalize our semidirect-product Fell bundles in that context.
  \end{rem}

  \section{Crossed Products}\label{sec:main}

  We want to see that there is a natural groupoid dynamical system
  associated to a groupoid action on a Fell bundle by
  isomorphisms. For this, we require that $H$ has an invariant Haar
  system $\lambdah=\set{\lambda_{H}^{v}}_{v\in \ho} $.  Furthermore,
  if $u\in\go$, then $H_{u}=\set{h\in H:\rho(h)=u}$ is the groupoid
  $\rho^{-1}(u)H\rho^{-1}(u)$ and $\set{\lambdah^{v}}_{\rho(v)=u}$ is
  a Haar system for $H_{u}$.  We fix a Fell bundle $p:\AA\to H$ and an
  action of $G$ by isomorphisms on $\AA$.  Let $\rho_{H}:H\to\go$ be
  the moment map.  If $\phi\in C_{b}(\go)$ and
  $f\in \Gamma_{c}(H,\AA)$, then let
  \begin{align}
    \label{eq:47}
    V(\phi)f(h)=\phi\bigl(\rho_{H}(h)\bigr)f(h).
  \end{align}
  If $f,g\in\Gamma_{c}(H,\AA)$, then
  \begin{align}
    \label{eq:48}
    V(\phi)(f*g)(h)
    &=\phi\bigl(\rho_{H}(h)\bigr) \int_{H}
      f(k)g(k^{-1}h)\,d\lambdah^{r(h)}(k) \\
    \intertext{which, since $\rho_{H}$ is constant on $H_{r(h)}$ by
    Remark~\ref{rem-sat}, is}
    &= \int_{H} f(k) \phi(\rho_{H}(k^{-1}h)g(k^{-1}h)
      \,d\lambdah^{r(h)}(k) \\
    &= f*\bigl(V(\phi)g\bigr)(h).
  \end{align}

  We can view $M(\cs(H,\AA))$ as the collection of adjointable
  operators on $\cs(H,\AA)$ viewed a right Hilbert module over itself
  with the inner product $\langle f,g\rangle=f^{*}*g$
  \cite{RW}*{Definition~2.48}.  Since
  $V(\phi)f^{*}=(V(\bar\phi)f)^{*}$, it follows from
  \begin{align}
    \label{eq:49}
    V(\phi)(f*g)=f*(V(\phi)g)
  \end{align}
  that $\langle V(\phi)f,g\rangle=\langle f,V(\bar \phi)g\rangle$.
  Then the usual arguments (like \cite{wil:toolkit}*{Lemma~1.48} for
  example) show that $V(\phi)$ extends to a bounded adjointable
  operator in the multiplier algebra $M(\cs(H,\AA))$ which lies in the center in view of
  \eqref{eq:49} and \cite{danacrossed}*{Lemma~8.3}.  Thus we have
  proved most of the following.

\begin{lemma}
  Suppose that $p:\AA\to H$ is a Fell bundle and that $G$ acts on
  $\AA$ by isomorphisms.  Then $\cs(H,\AA)$ is a $C_{0}(\go)$-algebra
  with $\cs(H,\AA)(u)=\cs(H_{u},\AA)$.
\end{lemma}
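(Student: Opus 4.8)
The plan is to build on the computation already carried out before the statement: the map $V\colon C_{0}(\go)\to M(\cs(H,\AA))$ given by \eqref{eq:47} is a homomorphism into the center of the multiplier algebra, so to complete the claim that $\cs(H,\AA)$ is a $C_{0}(\go)$-algebra I only need nondegeneracy, and this is immediate. Given $f\in\Gamma_{c}(H,\AA)$, I would choose $\phi\in C_{c}(\go)$ with $\phi\equiv 1$ on the compact set $\operatorname{supp}(f)$'s image $\rho_{H}(\operatorname{supp}f)$, so that $V(\phi)f=f$; hence $V(C_{0}(\go))\cdot\cs(H,\AA)$ contains the dense subalgebra $\Gamma_{c}(H,\AA)$. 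The fibre at $u\in\go$ is by definition the quotient $\cs(H,\AA)(u)=\cs(H,\AA)/I_{u}$, where $I_{u}=\overline{\operatorname{span}}\{\,V(\phi)a:\phi\in C_{0}(\go),\ \phi(u)=0,\ a\in\cs(H,\AA)\,\}$, and the whole problem is to identify this quotient with $\cs(H_{u},\AA)$.

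The bridge is the restriction map. Since $H_{u}=\rho_{H}^{-1}(u)$ is closed and, as recorded above the statement, $\{\lambdah^{v}\}_{\rho(v)=u}$ is a Haar system on the reduction $H_{u}$, I would define $R_{u}\colon\Gamma_{c}(H,\AA)\to\Gamma_{c}(H_{u},\AA)$ by $R_{u}f=f|_{H_{u}}$. The key point making this a $*$-homomorphism is Remark~\ref{rem-sat}: for $h\in H_{u}$ the integral $f*g(h)=\int f(k)g(k^{-1}h)\,d\lambdah^{r(h)}(k)$ runs only over $k$ with $r(k)=r(h)$, and every such $k$ and the corresponding $k^{-1}h$ again lie in $H_{u}$, so that $R_{u}(f*g)=R_{u}f*R_{u}g$ and $R_{u}(f^{*})=(R_{u}f)^{*}$ for the $H_{u}$-convolution. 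A representation of $\cs(H_{u},\AA)$ composed with $R_{u}$ is a representation of $\Gamma_{c}(H,\AA)$ that is $I$-norm bounded because $\|R_{u}f\|_{I}\le\|f\|_{I}$, so $R_{u}$ is norm-decreasing for the universal norms and extends to a homomorphism $\overline{R_{u}}\colon\cs(H,\AA)\to\cs(H_{u},\AA)$; surjectivity follows from the standard fact that a compactly supported continuous section over the closed set $H_{u}$ extends to an element of $\Gamma_{c}(H,\AA)$ (enough sections plus a partition of unity).

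It then remains to prove $\ker\overline{R_{u}}=I_{u}$. The inclusion $I_{u}\subseteq\ker\overline{R_{u}}$ is immediate, since $\phi(u)=0$ forces $V(\phi)f$ to vanish on $H_{u}$, where $\rho_{H}\equiv u$. The reverse inclusion is the crux, and I expect it to be the main obstacle; equivalently I must show the induced surjection $\cs(H,\AA)/I_{u}\to\cs(H_{u},\AA)$ is isometric. For this I would pass to representations: let $L$ be a faithful representation of the quotient, viewed as a representation of $\cs(H,\AA)$ annihilating $I_{u}$. Since $V(\phi)$ and the scalar $\phi(u)$ agree modulo $I_{u}$ on $\Gamma_{c}(H,\AA)$ (cut $\phi-\phi(u)$ down by a compactly supported bump equal to $1$ on $\rho_{H}(\operatorname{supp}f)$), the central multiplier $V(\phi)$ acts in $L$ as the scalar $\phi(u)$. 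Applying the disintegration theorem for representations of $\Gamma_{c}(H,\AA)$, $L$ is equivalent to the integrated form attached to a quasi-invariant measure $\mu$ on $\ho$; the scalar condition forces $\phi(\rho_{H}(v))=\phi(u)$ for $\mu$-almost every $v$ and all $\phi$, hence $\mu$ concentrates on $H_{u}\units=\rho_{H}^{-1}(u)\cap\ho$. Because $r(k)\in H_{u}\units$ forces $k\in H_{u}$ (Remark~\ref{rem-sat}), the integrated form depends only on $f|_{H_{u}}$, so $L=L'\circ R_{u}$ for a representation $L'$ of $\Gamma_{c}(H_{u},\AA)$, giving $\|L(f)\|\le\|R_{u}f\|_{\cs(H_{u},\AA)}$. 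As $L$ is faithful on the quotient, this yields $\|f+I_{u}\|\le\|\overline{R_{u}}f\|$, the reverse of the contraction already in hand, so $\overline{R_{u}}$ descends to an isometric isomorphism and $\cs(H,\AA)(u)\cong\cs(H_{u},\AA)$.
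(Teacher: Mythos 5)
Your proposal is correct and is essentially the paper's own proof: the paper establishes the central homomorphism $V$ exactly as you do and then identifies the fibres by citing \cite{wil:toolkit}*{Proposition~5.37} (with \cite{simwil:ijm13}*{Lemma~9} supplying boundedness for Fell bundles), and the argument behind those citations is precisely your restriction map $R_u$ together with the disintegration theorem, showing a representation annihilating $I_u$ has its quasi-invariant measure concentrated on $H_u^{(0)}$ and hence factors through $R_u$. Your reliance on the universal norm at the disintegration step is also faithful to the paper, whose footnote notes the identification is not known to hold for the reduced norm.
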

\begin{proof}
  All that is left is to identify the fibre.  But this can be done as
  in \cite{wil:toolkit}*{Proposition~5.37} using
  \cite{simwil:ijm13}*{Lemma~9} in place of
  \cite{wil:toolkit}*{Theorem~5.1}.
(This identification
    requires the universal \cs-norm. We do not know if it holds for
    the reduced norm.)
\end{proof}

Since $\lambdah$ is invariant, for each $x\in G$ we get an isomorphism
\begin{equation}
  \label{eq:54}
  \alpha_{x}:\cs(H_{s(x)},\AA)\to \cs(H_{r(x)},\AA)
\end{equation}
given by
\begin{equation}
  \label{eq:55}
  \alpha_{x}(f)(h)=x\cdot f(x^{-1}\cdot h)\quad\text{for $f\in
    \Gamma_{c}(H_{s(x)},\AA)$. }
\end{equation}

\begin{lemma}
  \label{lem-g-action} Let $p:\AA\to H$ be a Fell bundle.  Suppose
  that $G$ acts on $H$ by isomorphisms and that
  $\lambdah=\set{\lambdah^{v}}_{v\in \ho} $ is an invariant Haar
  system on $H$.  Then $\alpha=\set{\alpha_{x}}_{x\in G}$ as in
  \eqref{eq:55} above implements a groupoid dynamical system
  $(\cs(H,\AA),G,\alpha)$.
\end{lemma}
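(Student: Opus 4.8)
The plan is to verify the three conditions in the definition of a groupoid action for the family $\alpha=\set{\alpha_{x}}_{x\in G}$ of \eqref{eq:55}, using the fibring $\cs(H,\AA)(u)=\cs(H_{u},\AA)$ from the previous lemma. First I would check that each $\alpha_{x}$ is well defined as a map $\Gamma_{c}(H_{s(x)},\AA)\to\Gamma_{c}(H_{r(x)},\AA)$: for $h\in H_{r(x)}$ we have $x\inv\cdot h\in H_{s(x)}$, so $f(x\inv\cdot h)\in A(x\inv\cdot h)$ and $x\cdot f(x\inv\cdot h)\in A(h)$ because $G$ acts on $\AA$ by isomorphisms, while continuity and compact support follow since $h\mapsto x\inv\cdot h$ is a homeomorphism of $H_{r(x)}$ onto $H_{s(x)}$ and $a\mapsto x\cdot a$ is continuous. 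That $\alpha_{x}$ is multiplicative is the one computation using the hypotheses essentially: expanding $\alpha_{x}(f*g)(h)$, performing the change of variable $k\mapsto x\inv\cdot k$ under the invariance \eqref{eq:15} of $\lambdah$ (which replaces $d\lambdah^{x\inv\cdot r(h)}$ by $d\lambdah^{r(h)}$, using $r_{H}(x\inv\cdot h)=x\inv\cdot r_{H}(h)$ from Remark~\ref{rem-g-ho}), and then using that $x\cdot(\cdot)$ preserves both the groupoid operation on $H$ and the Fell-bundle multiplication, one recovers $(\alpha_{x}(f)*\alpha_{x}(g))(h)$. Preservation of involution is immediate from $f^{*}(h)=f(h\inv)^{*}$ together with the fact that $x\cdot(\cdot)$ preserves adjoints. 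Hence each $\alpha_{x}$ is a $*$-homomorphism.

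For condition~(2) and the remaining half of (1), the action axioms $x\cdot(y\cdot a)=(xy)\cdot a$ and $u\cdot a=a$ for $u\in\go$ give at once $\alpha_{xy}=\alpha_{x}\circ\alpha_{y}$ on $(x,y)\in G\pairs$ and $\alpha_{u}=\id$ for units. In particular $\alpha_{x}\circ\alpha_{x\inv}$ and $\alpha_{x\inv}\circ\alpha_{x}$ are identity maps, so $\alpha_{x}$ is a bijective $*$-homomorphism of the $*$-algebras $\Gamma_{c}(H_{s(x)},\AA)$ and $\Gamma_{c}(H_{r(x)},\AA)$; since such a map carries representations to representations it preserves the universal norms and therefore extends to the isometric isomorphism $\cs(H_{s(x)},\AA)\to\cs(H_{r(x)},\AA)$ promised in \eqref{eq:54}. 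This settles (1) and (2).

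The substantive point is the continuity condition~(3), that $(x,a)\mapsto\alpha_{x}(a)$ is a continuous action of $G$ on the \cs-bundle $\AA^{\sharp}$ over $\go$ with $\Gamma_{0}(\go,\AA^{\sharp})=\cs(H,\AA)$. Since each $\alpha_{x}$ is isometric and the elements $f|_{H_{u}}\in\cs(H_{u},\AA)$ with $f\in\Gamma_{c}(H,\AA)$ are dense in the fibres and yield continuous sections $u\mapsto f|_{H_{u}}$ determining the topology of $\AA^{\sharp}$, a standard $\epsilon/3$ argument (invoking the Banach-bundle axioms (B1) and (B4)) reduces the claim to showing, for each fixed $f\in\Gamma_{c}(H,\AA)$, that $x\mapsto\alpha_{x}(f|_{H_{s(x)}})$ is continuous from $G$ into $\AA^{\sharp}$. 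Here I would exploit that $\Phi(h,x)=x\cdot f(x\inv\cdot h)$ defines a continuous section of the pull-back $\pi_{1}^{*}\AA$ over the semidirect product $\hsdg$ --- joint continuity holds because $(h,x)\mapsto x\inv\cdot h$ is continuous and the $G$-action on $\AA$ is continuous --- whose slice $h\mapsto\Phi(h,x)$ on $H_{r(x)}$ is exactly $\alpha_{x}(f|_{H_{s(x)}})\in\Gamma_{c}(H_{r(x)},\AA)$. The expected main obstacle is then purely topological: one must show that passing from such a continuous section $\Phi$ over $\hsdg$ to its slices produces a continuous map into $\AA^{\sharp}$, i.e.\ that $x_{i}\to x$ forces $\alpha_{x_{i}}(f|_{H_{s(x_{i})}})\to\alpha_{x}(f|_{H_{s(x)}})$ in the fibre topology of $\AA^{\sharp}$. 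I would establish this with Fell's Criterion (Lemma~\ref{lem-fc}) to lift nets in $\go$ through the Haar-system integration defining the fibre norms on $\cs(H_{u},\AA)$, together with the upper-semicontinuity of the bundle norm, much as in the group case treated in \cite{kmqw1}*{Section~6}; the invariance of $\lambdah$ enters once more to control these integrals.
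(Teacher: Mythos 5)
Your reduction matches the paper's own argument step for step. The paper likewise treats the algebraic verifications as routine (its proof opens by declaring that ``the only issue is the continuity of the action''), realizes $\cs(H,\AA)=\Gamma_0(\go,\EE)$ for a bundle $q:\EE\to\go$ with fibres $E(u)=\cs(H_u,\AA)$, and uses upper semicontinuity of the norm together with the fact that each $\alpha_x$ is norm-reducing to reduce the continuity condition to showing that $x\mapsto\alpha_x(\check f(s(x)))$ is continuous into $\EE$ for a fixed $f\in\Gamma_c(H,\AA)$ --- exactly your $\epsilon/3$ step. Your function $\Phi(h,x)=x\cdot f(x^{-1}\cdot h)$ is also the same object the paper uses (there written $F(y,h)=\phi(y)\,y\cdot f(y^{-1}\cdot h)$ with a cutoff $\phi\in C_c(G)$ identically one near $x$). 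One small remark on your part (1): to see that $\alpha_x$ preserves the universal norm you should note that composing a representation with $\alpha_x$ preserves $I$-norm boundedness, which again comes from invariance of $\lambdah$.

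The one genuine gap is in the step you yourself flag as the main obstacle, and the tool you name there is mis-aimed. The fibre norms on $E(u)=\cs(H_u,\AA)$ are universal \cs-norms, not norms ``defined by Haar-system integration'' (the $I$-norm merely dominates them), and Fell's Criterion (Lemma~\ref{lem-fc}) is a statement about open surjections, not about convergence in $\EE$: the topology of $\EE$ is accessible only through its continuous sections $u\mapsto\check g(u)$ for $g\in\Gamma_c(H,\AA)$, so any proof of slice-continuity must route the comparison through such sections. The paper closes this, following \cite{muhwil:nyjm}*{Lemma~4.3}, by introducing the space $\cca$ of compactly supported continuous $\AA$-valued functions on $G*_{s}H$ and observing two things: (i) for an elementary tensor $F(x,h)=\phi(x)f(h)$ the slice map $x\mapsto\hat F(x)=\phi(x)\check f(s(x))$ is continuous into $\EE$ for free, because $\check f$ is by construction a continuous section of $\EE$; and (ii) sums of such tensors are dense in $\cca$ in the inductive-limit topology, and since the universal fibre norm is dominated by the $I$-norm (with the Haar system bounded on compacta), uniform approximation with common compact supports combines with the convergence criterion of \cite{mw:fell}*{Lemma~A.3} to yield continuity of $\hat F$ for every $F\in\cca$; specializing to $F(y,h)=\phi(y)\,y\cdot f(y^{-1}\cdot h)$ finishes. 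Some such mechanism for comparing your slices $\Phi(\cdot,x_i)$ with values of genuine sections of $\EE$ --- either the elementary-tensor density just described, or extending the single slice $\Phi(\cdot,x)$ to some $g\in\Gamma_c(H,\AA)$ and using upper semicontinuity plus compact supports to bound $\sup_h\|\Phi(h,x_i)-g(h)\|$ --- is exactly what your sketch omits; once supplied, the rest of your plan is sound.
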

\begin{proof}[Sketch of the Proof]
  The only issue is continuity of the action.  Since $\cs(H,\AA)$
  is a $C_{0}(\go)$-algebra, it can be realized as the sections of a
  Banach bundle $q:\EE\to\go$ with fibres $E(u)=\cs(H_{u},\AA)$.  We
  get a $G$-action on $\EE$ via $x\cdot e=\alpha_{x}(e)$.  To see that
  this is a continuous action, suppose that $e_{i}\to e$ and
  $x_{i}\to x$ with $q(e_{i})=s(x_{i})$.  To see that
  $x_{i}\cdot e_{i}\to x\cdot e$, we'll use
  \cite{mw:fell}*{Lemma~A.3}.  If $f\in \Gamma_{c}(H,A)$, then we get
  a section $\check f\in \Gamma_{c}(\go,\EE)$ where
  $\check f(u)\in \Gamma_{c}(H_{u},\AA)$ is given by
  \begin{align}
    \label{eq:57}
    \check f(x)(h)=f(h)\quad\text{for $h\in H_{u}$.}
  \end{align}

  Fix $\epsilon>0$.  Then we can choose $f$ so that
  $\|\check f(q(e))-e\|<\epsilon$.  By upper semicontinuity, we can
  assume $\|\check f(q(e_{i}))-e_{i}\|<\epsilon$ for all $i$.  Since
  $\alpha$ is norm reducing, we also have
  $\|\alpha_{x_{i}}(\check f(q(e_{i}))) -
  \alpha_{x_{i}}(e_{i})\|<\epsilon$ for all $i$.  Thus it will suffice
  to see that
  $\alpha_{x_{i}}(\check f(q(e_{i})))\to \alpha_{x}(\check f(q(e)))$
  in the topology on $\EE$.
  
  Our approach is modeled on the proof of \cite{muhwil:nyjm}*{Lemma~4.3}.
  Let $G*_{s}H=\set{(x,h):\rho(p(h))=s(x)}$, and let $\cca$ be the
  collection of continuous functions $F:G*H\to \AA$ such that
  $\rho(p(F(x,h)))=s(x)$ and $F$ vanishes off a compact subset of
  $G*_{s}H$.  Then $F$ determines a function $\hat F$ of $G$ into
  $\EE$ where $\hat F(x)\in\Gamma_{c}(H_{s(x)},\AA)\subset E(s(x))$ is
  given by
  \begin{equation}
    \label{eq:56}
    \hat F(x)(h)=F(x,h).
  \end{equation}
  If $F$ is of the form $(x,h)\mapsto \phi(x)f(h)$ for
  $\phi\in C_{c}(G)$ and $f\in \Gamma_{c}(H,\AA)$, then $\hat F$ is
  continuous from $G$ into $\EE$.  Since sums of such functions are
  appropriately dense in $\cca$, we see that $x\mapsto \hat F(x)$ is
  continuous for all $F\in\cca$.

  We get the result by considering
  $F(y,h)=\phi(y)y\cdot f(y^{-1}\cdot h)$ for $\phi\in C_{c}(G)$
  identically one near $x$.  Then
  $\hat F(y)(h)=\alpha_{y}(\check f(s(y))(h)$ for $y$ near $x$.
\end{proof}

As in \cite{mw:fell}*{Example~2.1}, we can realize
$\cs(H,\AA)\rtimes_{\alpha}G$ as the \cs-algebra of the Fell bundle
$\BB=r^{*}\EE=\set{(b,x)\in\EE\times G:q(b)=r(x)}$ where
  $\cs(H,\AA)\cong \Gamma_{0}(\go,\EE)$ as a above.  The multiplication
is given by $(b,x)(b',y)=(b\alpha_{x}(b'),xy)$ and involution by
$(b,x)^{*}= (\alpha_{x}^{-1}(b),x^{-1})$.  If
$a\in \Gamma_{c}(H,\AA)$, then we can define
$b_{a}\in \Gamma_{c}(\go,\EE)$ by $b_{a}(u)$ where
$b_{a}(u)\in \Gamma_{c}(H_{u},\AA)$ and $b_{a}(u)(h)=a(h)$.  If
$\phi\in C_{c}(G)$, then we get $(a\otimes \phi)\in \Gamma_{c}(G,\BB)$
where $(a\otimes\phi)(x)=\bigl(\phi(x)b_{a}(r(x)),x\bigr)$.  To keep
the notation from obscuring what is going on, we usually simply write
$(a\tensor\phi)(x)=\bigl(\phi(x)a(\cdot),x\bigr)$.  It is not hard to
see that such sections span a dense subspace of $\Gamma_{c}(G,\BB)$ in
the inductive limit topology.  Note that
\begin{align}
  \label{eq:72}
  (a\tensor\phi)(x)(a'\tensor\phi')(y)
  &=\bigl(\phi(x) a(\cdot),x\bigr)
    \bigl(\phi'(y) a'(\cdot),y\bigr) \\
  &=\bigl(\phi(x)\phi'(y) a(\cdot) \alpha_{x}(a'(\cdot)),xy\bigr) \\
  &=\bigl(\phi(x)\phi'(y) a''(\cdot), xy\bigr),
\end{align}
where
\begin{align}
  \label{eq:73}
  a''(h)=\int_{H} a(k) x\cdot a'(x^{-1}\cdot (k^{-1}h))
  \,d\lambdah^{r(h)}(k) \quad\text{for $h\in H_{r(xy)}=H_{r(x)}$.}
\end{align}

More generally, every $f\in\Gamma_{c}(\sd HG,\sd\AA G)$ is determined
by a function $F_{f}:\sd HG\to\AA$ where $f(h,g)=(F_{f}(h,x),x)$ and
$\rho(f(h,x))=r(x)$.  Then we get a section
$\check f\in \Gamma_{c}(G,\BB)$ given by
$\check f(x)=\bigl(F_{f}(\cdot,x),x\bigr)$ where $F_{f}(\cdot ,x)$ is
the corresponding element of $ \Gamma_{c}(H_{r(x)},\AA)$ as above:
$F_{f}(\cdot,x)(h)=F_{f}(h,x)$.  Then
\begin{align}
  \label{eq:41}
  \check f*\check g(x)
  &=\int_{G} \check f(y) \check
    g(y^{-1}x)\,d\lambdag^{r(x)}(y) \\
  &=\int_{G}\bigl(F_{f}(\cdot,y),y\bigr)
    \bigl(F_{g}(\cdot,y^{-1}x),y^{-1}x\bigr) \,d\lambdag^{r(x)}(y) \\
  &=\int_{G} \bigl(F_{f}(\cdot ,y)\alpha_{y}\bigl(F_{g}(\cdot,
    y^{-1}x)\bigr),x\bigr) \,d\lambdag^{r(x)}(y) \\
  &=\int_{G}\bigl(a'(\cdot),x\bigr)\,d\lambdag^{r(x)}(y)
\end{align}
where $a'\in \Gamma_{c}(H_{r(x)},\AA)$ is given by
\begin{align}
  \label{eq:74}
  a'(h)=\int_{H} F_{f}(k,y) y\cdot F_{g}(y^{-1}\cdot
  (k^{-1}h),y^{-1}x) \,d\lambdah^{r(h)}(k).
\end{align}
Arguing as in \cite{danacrossed}*{Lemma~1.108}, we see that
$\check f*\check g(x)=(a''(\cdot),x)$ with
$a''\in \Gamma_{c}(H_{r(x)},\AA)$ and
\begin{align}
  \label{eq:61}
  a''(h)
  &= \int_{G} \int_{H} F_{f}(k,y)y\cdot F_{g}(y^{-1}\cdot
    (k^{-1}h),y^{-1}x) \, d\lambdah^{r(h)}(k) \,d\lambda_{G}^{r(x)}(y).
\end{align}

On the other hand, supposing
$f,g\in\Gamma_{c}\bigl(\sd HG,\sd\AA G\bigr)$, we have
\begin{align}
  \label{eq:75}
  f*g(h,x)
  &=\int_{G} \int_{H} f(k,y) g\bigl((k,y)^{-1}(h,x)\bigr)
    \,d\lambdah^{r(h)} (k) \,d\lambdag^{r(x)}(y) \\
  &=\int_{G} \int_{H} f(k,y) g\bigl((y^{-1}\cdot k^{-1},y^{-1})(h,x)\bigr)
    \,d\lambdah^{r(h)} (k) \,d\lambdag^{r(x)}(y) \\
  &=\int_{G} \int_{H} f(k,y) g
    (y^{-1}\cdot (k^{-1}h), y^{-1}x)
    \,d\lambdah^{r(h)} (k) \,d\lambdag^{r(x)}(y) \\
  &=\int_{G} \int_{H}
    \bigl(F_{f}(k,y),y\bigr) \bigl(F_{g}(y^{-1}\cdot
    (k^{-1}h),y^{-1}x),y^{-1}x\bigr)  \,d\lambdah^{r(h)} (k)
    \,d\lambdag^{r(x)}(y)  \\
  &= \int_{G} \int_{H}
    \bigl(F_{f}(k,y) y\cdot F_{g}(y^{-1}\cdot (k^{-1}h),y^{-1}x),x
    \bigr) \,d\lambdah^{r(h)} (k)
    \,d\lambdag^{r(x)}(y).
\end{align}

\begin{thm}
  \label{thm-cross-prod} Suppose that $p:\AA\to H$ is a Fell bundle
  and that $G$ acts on $\AA$ by automorphisms.  We suppose that the
  Haar system on $H$ is $G$-invariant so that $\sd HG$ has a Haar
  system as in Lemma~\ref{lem-haar-sdp}. Then $\cs(\sd HG,\sd\AA G)$
  is isomorphic to the crossed product $\cs(H,\AA)\rtimes_{\alpha}G$
  for the dynamical system from Lemma~\ref{lem-g-action}.
\end{thm}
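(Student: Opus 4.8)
The plan is to establish the isomorphism first at the level of the dense $*$-subalgebras and then promote it to the completions by matching their representation theories. Algebraically I would use the ``uncurrying'' map $\Phi\colon\Gamma_c(\sd HG,\sd\AA G)\to\Gamma_c(G,\BB)$ sending $f$ to the section $\check f(x)=\bigl(F_f(\cdot,x),x\bigr)$ introduced above. Since both algebras are parametrized by the same datum---a compactly supported continuous $\AA$-valued function $F_f$ on $\sd HG$---the map $\Phi$ is a continuous linear bijection for the inductive-limit topologies, with the evident inverse. Multiplicativity of $\Phi$ is exactly the coincidence of the two integral formulas \eqref{eq:61} and \eqref{eq:75}, since there $\check f*\check g(x)=\bigl(a''(\cdot),x\bigr)$ and $f*g(h,x)=\bigl(a''(h),x\bigr)$ share the same kernel $a''$. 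That $\Phi$ is $*$-preserving follows from the parallel computation $F_{f^*}(h,x)=x\cdot F_f(x\inv\cdot h\inv,x\inv)^*$, which matches the involution on $\BB$ once one uses that the $G$-action commutes with inversion in $H$ (so $x\inv\cdot h\inv=(x\inv\cdot h)\inv$) and that each $\alpha_x$ is a $*$-isomorphism. Thus $\Phi$ is a $*$-isomorphism of dense $*$-subalgebras.

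To extend $\Phi$ to the enveloping $\cs$-algebras it suffices to prove $\|f\|=\|\check f\|$ for the two universal norms, and I would do this by setting up a bijection between the representations defining those norms under which corresponding representations have the same integrated form via $\Phi$. The natural intermediary is the covariant-representation picture of the crossed product from \cite{muhwil:nyjm}*{Section~7}: a representation of $\cs(H,\AA)\rtimes_\alpha G$ is the integrated form of a covariant pair $(\pi,U)$, with $\pi$ a representation of $\cs(H,\AA)$ and $U$ an implementing $G$-representation, whereas by Renault's disintegration theorem \cite{mw:fell} a representation of $\cs(\sd HG,\sd\AA G)$ is the integrated form of a representation $L$ of the Fell bundle $\sd\AA G$ over $\sd HG$, disintegrated over $\sd HG\units\cong\ho$.

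Given such an $L$, I would extract the covariant pair as follows. Restricting $L$ to the sub-Fell-bundle indexed by $x=\rho_H(h)$ (a unit of $G$, which therefore acts trivially, so that $r(h,\rho_H(h))$ and $s(h,\rho_H(h))$ collapse to $r_H(h)$ and $s_H(h)$) identifies that sub-bundle with $\AA\to H$ and yields a representation $\pi$ of $\cs(H,\AA)$. The $G$-direction of $\sd HG$ then supplies, through the unit fibres $A(v,x)\cong A(v)$ with $v\in\ho$, operators $U_x\colon\HH_{x\inv\cdot v}\to\HH_v$, and the semidirect-product rule $(a,x)(b,u)=\bigl(a(x\cdot b),x\bigr)$ translates at the operator level into the covariance relation $U_x\,\pi_{s(x)}(b)\,U_x^{*}=\pi_{r(x)}(\alpha_x(b))$, where $\pi_u$ is the fibre representation of $\cs(H_u,\AA)$. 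Conversely, from a covariant pair I would rebuild the Hilbert bundle over $\ho$ by disintegrating each fibre representation $\pi_u$ over $H_u\units=\set{v\in\ho:\rho_H(v)=u}$ and reassemble a representation of $\sd\AA G$. In either direction a direct comparison of integrated forms shows that the reconstructed representation of one algebra, precomposed with $\Phi^{\pm1}$, is the representation of the other one started with; hence the two universal norms agree and $\Phi$ extends to the asserted isomorphism.

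The main obstacle is precisely this matching of representation theories. The two Fell-bundle $\cs$-norms arise from genuinely different $I$-norms---on $\sd\AA G$ one integrates $\|F_f\|$ over all of $\sd HG$, while the fibre norm of $\BB=r^{*}\EE$ already involves the \emph{completed} norm of $\cs(H_u,\AA)$---so no naive $I$-norm comparison can succeed, and one is forced into the iterated disintegration above. The delicate technical point is the construction of the implementing operators $U_x$ from the groupoid representation and the verification that they are unitary and satisfy covariance; this is the groupoid analogue of \cite{kmqw1}*{Section~6}, with Renault's disintegration theorem furnishing the crucial fact that every suitably bounded representation is the integrated form of Fell-bundle data to begin with.
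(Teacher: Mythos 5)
Your first half coincides with the paper's: the same map $\Phi(f)=\check f$, with multiplicativity read off from the agreement of the two convolution computations preceding the theorem, and the same involution identity. One correction, though: $\Phi$ is \emph{not} a bijection onto $\Gamma_c(G,\BB)$ with an ``evident inverse''. The fibres of $\BB=r^{*}\EE$ are the completed algebras $\cs(H_{r(x)},\AA)$, whereas $\check f(x)$ always takes values in the dense subalgebra $\Gamma_c(H_{r(x)},\AA)$; so $\Phi$ is injective with dense range in the inductive limit topology, and representations of $\cs(G,\BB)$ cannot be transported back through $\Phi\inv$. Fortunately that direction is also unnecessary: since $\Phi$ extends to a surjective homomorphism, $\|\Phi(f)\|\le\|f\|$ is automatic, and only the single inequality $\|f\|\le\|\Phi(f)\|$ must be proved, by converting one faithful representation $L$ of $\cs(\sd HG,\sd\AA G)$ into a representation $\underline L$ of $\cs(G,\BB)$ with $L=\underline L\circ\Phi$. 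The paper does exactly this and never needs the converse correspondence (covariant pair to bundle representation) that you sketch.

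The genuine gap is that the step you flag as ``delicate''---producing the implementing data $(\pi,U)$, or in the paper's formulation the Borel $*$-functor $\Pi(b,x)=\bar\pi_x(b)$ on $\BB$---is where essentially all the work lies, and your sketch omits the ideas that make it possible. After disintegrating $L$ as $(\mu,\ho*\HH,\pi)$, one must disintegrate $\mu$ over $\rho:\ho\to\go$ as $\mu=\int\mu^u\,d\mu_G(u)$; prove that $\mu_G$ is quasi-invariant on $\go$ and that $\mu$ is quasi-invariant both for $H$ and for the subgroupoid $\grho$ (your assertion that restricting $L$ ``yields a representation $\pi$ of $\cs(H,\AA)$'' silently uses the $H$-quasi-invariance, which is Lemma~\ref{lem-mu-h-quasi} and is not automatic); choose versions of the modular functions so that the factorization $\Delta(h,x)=\Delta_H(h)\delta(s_H(h),x)$ holds \emph{everywhere}, not merely almost everywhere (Proposition~\ref{prop-formula-Delta}); and establish, on an inessential reduction $G|_U$, the Radon--Nikodym identity $\mu^{s(x)}=\delta(\cdot,x\inv)\Delta_G(x)(x\inv\cdot\mu^{r(x)})$ (Proposition~\ref{prop-key}). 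Without this last identity your operators $U_x$ cannot be shown unitary, and indeed cannot even be set up correctly: they must act between the direct-integral fibres $K(s(x))=L^2(\ho*\HH,\mu^{s(x)})$ and $K(r(x))$, not fibrewise from $\HH_{x\inv\cdot v}$ to $\HH_v$, and unitarity \emph{is} the measure identity above, up to the modular weights. Note too that the paper sidesteps your approximate-identity construction of $U_x$ altogether: it defines $\bar\pi_x(b)$ directly by an integral formula weighted by $\Delta(h,x)^{-1/2}\Delta_G(x)^{1/2}$ and proves an $I$-norm bound via Cauchy--Schwarz, Lemma~\ref{lem-one-more} and Proposition~\ref{prop-key}, so that $\bar\pi_x$ extends to the completed fibre $\cs(H_{s(x)},\AA)$---this is precisely the point your ``no naive $I$-norm comparison can succeed'' remark raises but does not resolve. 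So your plan follows the paper's architecture, but as written the proof of the hard inequality is missing.
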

  
By the above, the map $f\mapsto \check f$ from
$\Gamma_{c}(\sd HG,\sd\AA G)$ to $\Gamma_{c} (G,\BB)$ sends $f*g$ to
$\check f*\check g$.  A similar computation shows that it sends
$f^{*}$ to $(\check f)^{*}$.  Thus we get a $*$-homomorphism
$\Phi: \Gamma_{c}(\sd HG,\sd\AA G) \to \Gamma_{c} (G,\BB)$.  It is not
hard to see that this map is continuous in the inductive limit
topology and has dense range.  Hence $\Phi$ extends to a surjective
homomorphism of $\cs(\sd HG,\sd\AA G)$ onto
$\cs(G,\BB)\cong \cs(H,\AA)\rtimes_{\alpha} G$.

To show that $\Phi$ is isometric, and therefore the desired
isomorphism, we will proceed as follows.  Given a faithful
representation, $L$, of $\cs(\sd HG,\sd\AA G)$, we will produce a
representation $\underline L$ of $\cs(G,\BB)$ such that
$L(f)=\underline L(\Phi(f))$.  Then
$\|f\|=\|L(f)\|=\|\underline L(\Phi(f))\|\le \|\Phi(f)\|\le \|f\|$.
This will suffice.

We will identify $\sd HG\units$ with $\ho$.  (Then
$s(h,x)=x^{-1}\cdot s_{H}(h)$ and $r(h,x)=r_{H}(h)$.)  By
\cite{mw:fell}*{Theorem~4.13}, we can assume that $L$ is the
integrated form of a strict representation $(\mu,\ho*\HH,\pi)$.  We
can assume that $\mu$ is finite and use
\cite{danacrossed}*{Theorem~I.5} to disintegrate the quasi-invariant
measure $\mu$ with respect to the moment map $\rho:\ho\to\go$.  Thus
off a $\mu$-null set $N(\mu)$ we obtain probability measures $\mu^{u}$
on $\ho$ with support in $H_{u}\units=(H_{u})\units$ such that for
every suitable Borel function $h$ on $\ho$
\begin{align}
  \label{eq:79}
  \int_{\ho}h(v)\,d\mu(v)=\int_{\go}\int_{\ho} h(v) \,d\mu^{u}(v) \,
  d\mu_{G}(u) 
\end{align}
where $\mu_{G}=\rho_{*}\mu$ is the push-forward
  of $\mu$
to $\go$
(push-forward
  measures are examined in \cite{danacrossed}*{Lemma~H.13}).
  We let $\Delta$ be the modular function on $\sd HG$ for
$\mu$.  Since there are a number of groupoids that will support
  representations, we will have multiple quasi-invariant measures and
  their modular functions in play.  Although we will always assume
  modular functions on our groupoids are taken to be Borel
  homomorphisms into the positive multiplicative reals
  (\cite{wil:toolkit}*{Proposition~7.6}), one of the technical
  difficulties will be to insure all these functions play nice with
  one-another.  In part
  because modular functions are only defined almost everywhere, this
  will be a delicate operation culminating in
  Proposition~\ref{prop-formula-Delta} where it is crucial that the
  equality proved there holds everywhere.

Let $\nu_{H}=\mu\circ \lambdah$ and
$\nu_{G}=\mu_{G}\circ\lambdag$.  For a summary of some of the measure
theory required, see \cite{wil:toolkit}*{\S3.1}.

\begin{lemma}\label{lem-mug-quasi}
  The push forward $\mu_{G}=\rho_{*}\mu$ is quasi-invariant on $\go$.
  Thus if $\Delta_{G}$ is the corresponding modular function on $G$,
  we have
  \begin{align}
    \label{eq:96}
    \int_{\go}\int_{G}f(x^{-1})\Delta_{G}(x^{-1}) \,d\lambdag^{u}(x)
    \,d\mu_{G}(u) = \int_{\go}\int_{G}f(x) \,d\lambdag^{u}(x)
    \,d\mu_{G}(x) 
  \end{align}
  for all $f\in C_{c}(G)$.
\end{lemma}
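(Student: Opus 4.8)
The plan is to separate the two assertions. The displayed identity \eqref{eq:96} is nothing but the defining property of the modular function once quasi-invariance is known: writing $\nu_{G}=\mu_{G}\circ\lambdag$, quasi-invariance means that $\nu_{G}$ is equivalent to its image $\nu_{G}\inv$ under inversion, and then $\Delta_{G}=d\nu_{G}/d\nu_{G}\inv$ satisfies
\[
\int_{G}f\,d\nu_{G}=\int_{G}f\,\Delta_{G}\,d\nu_{G}\inv=\int_{G}f(x\inv)\Delta_{G}(x\inv)\,d\nu_{G}(x)
\]
for every $f\in C_{c}(G)$, which is exactly \eqref{eq:96} after unwinding $\nu_{G}=\mu_{G}\circ\lambdag$. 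So the entire content of the lemma is the quasi-invariance of $\mu_{G}$, i.e. $\nu_{G}\sim\nu_{G}\inv$.

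To establish this I would exploit the groupoid homomorphism $q\colon\hsdg\to G$, $q(h,x)=x$, which restricts to $\rho$ on units, together with the measure $\nu=\mu\circ\lambda$ on $\hsdg$ built from the product Haar system of Lemma~\ref{lem-haar-sdp}. The key step is to show that for every Borel set $E\subseteq G$,
\[
\nu_{G}(E)=0\quad\Longleftrightarrow\quad\nu\bigl(q\inv(E)\bigr)=0 .
\]
To prove this I would compute, using that $\lambda^{(v,\rho(v))}=\lambdah^{v}\times\lambdag^{\rho(v)}$ and that the indicator of $q\inv(E)$ depends only on the $G$-coordinate,
\[
\nu\bigl(q\inv(E)\bigr)=\int_{\ho}\lambdah^{v}(H)\,\lambdag^{\rho(v)}(E)\,d\mu(v).
\]
Since each $\lambdah^{v}$ is a nonzero measure, $\lambdah^{v}(H)\in(0,\infty]$ is strictly positive, so this integral of a nonnegative function vanishes if and only if $\lambdag^{\rho(v)}(E)=0$ for $\mu$-almost every $v$; and because $\mu_{G}=\rho_{*}\mu$ this is in turn equivalent to $\lambdag^{u}(E)=0$ for $\mu_{G}$-almost every $u$, i.e. to $\nu_{G}(E)=0$.

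With this equivalence in hand, the quasi-invariance of $\mu_{G}$ drops out of the (hypothesized) quasi-invariance of $\mu$ on $\hsdg$, because $q$, being a homomorphism, satisfies $q\inv(E\inv)=\bigl(q\inv(E)\bigr)\inv$. Indeed, $\nu_{G}\inv(E)=0\iff\nu_{G}(E\inv)=0\iff\nu\bigl(q\inv(E\inv)\bigr)=0\iff\nu\bigl(q\inv(E)\bigr)=0\iff\nu_{G}(E)=0$, where the first step is the definition of the inverse measure, the second and last invoke the key equivalence, and the middle step combines $q\inv(E\inv)=(q\inv E)\inv$ with $\nu\sim\nu\inv$. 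Hence $\nu_{G}\sim\nu_{G}\inv$, and $\Delta_{G}$ together with \eqref{eq:96} follows as in the first paragraph.

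The one delicate point is the bookkeeping in the factorization of $\nu(q\inv(E))$: the mass $\lambdah^{v}(H)$ is typically infinite, so the product $\lambdah^{v}(H)\,\lambdag^{\rho(v)}(E)$ must be read with the convention $\infty\cdot 0=0$. The argument is arranged so that this is harmless: I only ever infer from ``the integral of a nonnegative function is zero'' that ``the integrand vanishes almost everywhere'', and it is precisely the strict positivity---not the finiteness---of $\lambdah^{v}(H)$ that lets nullity transfer across $q$ in both directions. The invariance of $\lambdah$ enters only upstream, in guaranteeing via Lemma~\ref{lem-haar-sdp} that $\lambda=\lambdah\times\lambdag$ is a Haar system for $\hsdg$ in the first place.
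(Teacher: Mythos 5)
Your proof is correct, but it takes a genuinely different route from the paper's. The paper argues quantitatively: it fixes a nowhere-vanishing $g\in C^{+}(H)$ with $\lambdah(g)(v)=1$ for all $v$ (via \cite{wil:toolkit}*{Lemma~3.19}), inserts it into $\nu_{G}(f)$, applies the modular function $\Delta$ of $\hsdg$ under the inversion $(h,x)\mapsto(x\inv\cdot h\inv,x\inv)$, disintegrates $\mu$ over $\go$, and integrates out the $H$-variable to arrive at $\nu_{G}(f)=\nu_{G}\inv(fB^{*})$ for an explicit Borel density $B$ with values in $(0,\infty]$; equivalence of $\nu_{G}$ and $\nu_{G}\inv$ then follows from strict positivity of $B$. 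You instead argue purely qualitatively, transferring null sets along the quotient homomorphism $q(h,x)=x$: the factorization $\nu\bigl(q\inv(E)\bigr)=\int_{\ho}\lambdah^{v}(H)\,\lambdag^{\rho(v)}(E)\,d\mu(v)$ (read with the convention $\infty\cdot 0=0$, as you rightly flag), the strict positivity of $\lambdah^{v}(H)$, and $\mu_{G}=\rho_{*}\mu$ give $\nu\bigl(q\inv(E)\bigr)=0$ if and only if $\nu_{G}(E)=0$, and then $q\inv(E\inv)=\bigl(q\inv(E)\bigr)\inv$ together with the hypothesis $\nu\sim\nu\inv$ yields $\nu_{G}\sim\nu_{G}\inv$; the identity \eqref{eq:96} is then, as you observe, just the definition of $\Delta_{G}=d\nu_{G}/d\nu_{G}\inv$. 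Your route is shorter, avoids both the auxiliary normalized function $g$ and any manipulation of $\Delta$, and isolates a reusable principle: quasi-invariance pushes forward along a groupoid homomorphism whose preimages factor fibrewise with strictly positive mass. What the paper's heavier computation buys is the template that recurs in Lemmas~\ref{lem-phi-equi} and~\ref{lem-mu-h-quasi} and in Proposition~\ref{prop-formula-Delta}, where bare null-set equivalence no longer suffices and explicit relations among the modular functions $\Delta$, $\Delta_{H}$, and $\delta$ must be extracted from exactly this kind of density calculation. The one point you should make explicit is the Borel measurability of $v\mapsto\lambdah^{v}(H)\,\lambdag^{\rho(v)}(E)$ for Borel $E\subseteq G$, which follows by the standard monotone class extension of the Haar system integrals from $C_{c}$ to Borel sets; with that noted, your Tonelli computation and the ensuing chain of equivalences are sound.
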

\begin{proof}
  Suppose that $f$ is a non-negative Borel function on $G$ and that
  $g\in C^{+}(H)$ is a nowhere vanishing function such that
  $\lambdah(g)(v)=1$ for all $u\in\ho$ (see
  \cite{wil:toolkit}*{Lemma~3.19}).  Then
  \begin{align}
    \label{eq:77}
    \nu_{G}(f)
    &= \int_{\go}\int_{G}f(x)\,d\lambdag^{u}(x) \,d\mu_{G}(u) \\
    &= \int_{\ho}\int_{G}f(x) \,d\lambdag^{\rho(v)}(x) \,d\mu(v) \\
    &= \int_{\ho}\int_{G}\int_{H} f(x) g(h)  \,d\lambdah^{v}(h)
      \,d\lambdag^{\rho(v)} (x) \,d\mu(v) \\
    &=\int_{\ho}\int_{G}\int_{H} f(x^{-1}) g(x^{-1}\cdot h^{-1})
      \Delta(x^{-1}\cdot h^{-1},x^{-1}) \,d\lambdah^{v}(h)
      \,d\lambdag^{\rho(v)} (x) \,d\mu(v) \\
    &=\int_{\ho}\int_{G}f(x^{-1}) \underbrace{\Bigl( \int_{H} g(x^{-1}\cdot h^{-1})
      \Delta(x^{-1}\cdot h^{-1},x^{-1}) \,d\lambdah^{v}(h)\Bigr)}_{B(x,v)} \\
      &\hskip3in
      \,d\lambdag^{\rho(v)}(x) 
      \,d\mu(v) \\
    &=\int_{\ho}\int_{G}f(x^{-1})B(x,v) \,d\lambdag^{\rho(v)}(x)
      \,d\mu(v) \\
    &=\int_{\go}\int_{\ho}\int_{G} f(x^{-1}) B(x,v)
      \,d\lambdag^{\rho(v)}(x) \,d\mu^{u}(v) \,d\mu_{G}(u) \\
    &=\int_{\go}\int_{\ho}\int_{G} f(x^{-1}) B(x,u)
      \,d\lambdag^{u}(x) \,d\mu^{u}(v) \,d\mu_{G}(u) \\
    &=\int_{\go}\int_{G} f(x^{-1}) \underbrace{\Bigl( \int_{\ho}B(x,v)
      \,d\mu^{u}(v)\Bigr)}_{B(x)} \,d\lambdag^{u}(x) \,d\mu_{G}(u)  \\
    &=\int_{\go}\int_{G}f(x^{-1}) B(x) \,d\lambdag^{u}(x) \,d\mu_{G}(u)
    \\
    &= \nu_{G}^{-1}(f B^{*}).
  \end{align}

  Since the $g$ and $\Delta$ are both strictly positive, $B$ is a
  Borel function taking values in $(0,\infty]$.  Thus if $f$ is the
  characteristic function of a Borel set in $G$, $f$ is equal to zero
  $\nu_{G}$-almost everywhere and and only if it is equal to zero
  $\nu_{G}^{-1}$-almost everywhere.  Hence $\nu_{G}$ and $\nu_{G}^{-1}$
  are equivalent measures on $G$. That is, $\mu_{G}$ is
  $G$-quasi-invariant.
\end{proof}

Let $\grho=\set{(v,x)\in\sd HG:v\in\ho}$.  Then $\grho$ is a closed
subgroupoid of $\sd HG$ with unit space identified with $\ho$.  Thus
$s(v,x)=x^{-1}\cdot v$ and $r(v,x)=v$.  Then
$(v,x)(x^{-1}\cdot v,y)=(v,xy)$.
We can view $\grho$ as the
  action groupoid for the $G$ action on $\go$ where we have used a
  different convention for writing the action groupoid as pairs---see
  Remark~\ref{rem-other}.
  As in Lemma~\ref{lem-haar-sdp}, we get a
Haar system $\sigma$ on $\grho$ where
\begin{align}
  \sigma(f)(v)=\int_{G}f(v,x)\,d\lambdag^{\rho(v)}(x).
\end{align}

\begin{lemma}
  \label{lem-phi-equi} The measure $\mu$ is quasi-invariant when $\ho$
  is viewed as the unit space of $\grho$.  Thus if we write $\delta$
  for its modular function on $\grho$, then $\delta$ is a Borel
  homomorphism such that
  \begin{align}
    \label{eq:97}
    &\int_{\ho}\int_{G} \phi(x^{-1}\cdot v,x^{-1})\delta(x^{-1}\cdot v,
    x^{-1}) \,d\lambdag^{\rho(v)} \,d\mu(v)\\
    & = \int_{\ho}\int_{G}
    \phi(v,x) \,d\lambdag^{\rho(v)}(x) \,d\mu(v)
  \end{align}
  for suitable Borel functions $\phi$ on $\grho$.
\end{lemma}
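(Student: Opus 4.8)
The plan is to transport the quasi-invariance of $\mu$ from $\hsdg$ down to its subgroupoid $\grho$ by a fattening argument modeled on the proof of Lemma~\ref{lem-mug-quasi}. Fix a nonnegative Borel function $\phi$ on $\grho$ and, exactly as in that proof, choose $g\in C^{+}(H)$ nowhere vanishing with $\lambdah(g)(v)=1$ for every $v\in\ho$. Then $F(t,x)=g(t)\phi(r_H(t),x)$ is a Borel function on $\hsdg$, and since $r_H(t)=v$ on the support of $\lambdah^{v}$ and $\int_H g\,d\lambdah^{v}=1$, integrating out the $H$-variable gives $\nu_{\hsdg}(F)=\nugrho(\phi)$, where $\nu_{\hsdg}=\mu\circ\lambda$ is the measure attached to the Haar system of Lemma~\ref{lem-haar-sdp}. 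Thus it suffices to run the change of variables $\gamma\mapsto\gamma^{-1}$ on $\nu_{\hsdg}(F)$ and read off an equivalence between $\nugrho$ and $\nugrho^{-1}$.

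Applying the quasi-invariance of $\mu$ for $\hsdg$ (with modular function $\Delta$) rewrites $\nu_{\hsdg}(F)$ as $\int F(\gamma^{-1})\Delta(\gamma^{-1})\,d\nu_{\hsdg}(\gamma)$. Writing $\gamma=(t,x)$, using $(t,x)^{-1}=(x^{-1}\cdot t^{-1},x^{-1})$ and $r_H(x^{-1}\cdot t^{-1})=x^{-1}\cdot s_H(t)$ (Remark~\ref{rem-g-ho}), this becomes an integral whose $\phi$-factor is $\phi(x^{-1}\cdot s_H(t),x^{-1})$. I expect this to be the crux of the argument: unlike in Lemma~\ref{lem-mug-quasi}, where the test function lived only on $G$ and inversion produced the clean argument $x^{-1}$, here $\phi$ sees the unit coordinate, and after inversion that coordinate is $x^{-1}\cdot s_H(t)$, depending on the \emph{source} $s_H(t)$ rather than on the range unit $v$ over which $\mu$ is integrated.

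To clear this obstacle I would flip $t\mapsto t^{-1}$ in the $H$-direction, which replaces $s_H(t)$ by $r_H(t)=v$ and turns the $\phi$-factor into $\phi(x^{-1}\cdot v,x^{-1})$, now independent of $t$. This flip is legitimate either fibrewise over the disintegration $\mu=\int_{\go}\mu^{u}\,d\mu_G(u)$ of \eqref{eq:79}, where each $\mu^{u}$ is supported on the units of $H_u$ and is quasi-invariant for $H_u$, or directly once one records that $\mu$ is quasi-invariant for the bundle of groupoids $H$ via $\nu_H=\mu\circ\lambdah$; in either case the invariance of $\lambdah$ (Definition~\ref{def-invariant}) is precisely what lets one rewrite the $G$-translated Haar measures $\lambdah^{x^{-1}\cdot v}$ that appear, keeping everything expressed in terms of $\lambdah$, $\lambdag$, $\mu$, and $\Delta$. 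Pulling the now $t$-free factor $\phi(x^{-1}\cdot v,x^{-1})$ out and collapsing the remaining $H$-integral against $g$ produces, after the $\mu^{u}$-disintegration is reassembled (the passage from $B(x,v)$ to $B(x)$ of Lemma~\ref{lem-mug-quasi}), a strictly positive Borel function on $\grho$ realizing $\nugrho(\phi)=\nugrho^{-1}(\delta\,\phi)$. This shows $\nugrho$ and $\nugrho^{-1}$ are equivalent, so $\mu$ is quasi-invariant for $\grho$, and identifies $\delta=d\nugrho/d\nugrho^{-1}$; equation~\eqref{eq:97} is just the standard restatement of this equivalence. Finally, a Radon--Nikodym cocycle of this type can be taken to be a Borel homomorphism (\cite{wil:toolkit}*{Proposition~7.6}), so $\delta$ is a Borel homomorphism as asserted, with the delicate task of selecting representatives that agree everywhere with the other modular functions in play deferred to Proposition~\ref{prop-formula-Delta}.
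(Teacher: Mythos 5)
Your overall strategy coincides with the paper's: fatten the measure $\nugrho$ into $\nu=\mu\circ\lambda$ on $\hsdg$ using $g\in C^{+}(H)$ with $\lambdah(g)\equiv 1$, invert with the modular function $\Delta$ of $\hsdg$, and factor out a strictly positive Borel density (the paper runs the computation starting from $\nugrho^{-1}(f)$ rather than $\nugrho(\phi)$, which is immaterial). You have also put your finger on the genuinely delicate point: since $r_{H}(x^{-1}\cdot t^{-1})=x^{-1}\cdot s_{H}(t)$, after inversion the unit coordinate of the test function is $x^{-1}\cdot s_{H}(t)$ rather than $x^{-1}\cdot v$, so the $\phi$-factor does not pull out of the $H$-integral. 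The paper's printed proof passes directly to an integrand of the form $f(v,x)$ times an $h$-integral at exactly this spot, which is the step your observation exposes.

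The problem is that your repair is circular within the paper's development. The flip $t\mapsto t^{-1}$ under $d\lambdah^{v}(t)\,d\mu(v)$ is precisely the quasi-invariance of $\mu$ for $H$ (that $\nu_{H}\sim\nu_{H}^{-1}$, with density $\Delta_{H}$); in the paper this is Lemma~\ref{lem-mu-h-quasi}, proved \emph{after} the present lemma, and its proof uses the modular function $\delta$ being constructed here (it converts terms $C(x^{-1}\cdot v,x^{-1})$ into $C(v,x)\delta(v,x)^{-1}$ via \eqref{eq:97}). Your fibrewise alternative fares no better: the quasi-invariance of $\mu^{u}$ on $H_{u}^{(0)}$ is Lemma~\ref{lem-one-more}, which rests on $\Delta_{H}$ and Proposition~\ref{prop-formula-Delta}, hence again on this lemma; the disintegration \eqref{eq:79} supplies only the existence of the measures $\mu^{u}$, not any quasi-invariance. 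So ``recording'' that $\mu$ is $H$-quasi-invariant is not available at this stage, and the natural attempt to prove it first by the same fattening trick runs into the symmetric obstruction: the test function acquires an argument $x^{-1}\cdot h$, and controlling the translated measures $\lambdah^{x^{-1}\cdot v}$ against $d\mu(v)$ is exactly the $\grho$-quasi-invariance you are trying to establish. To make your route non-circular you would need an independent argument that $s_{*}\nu_{H}$ and $\mu$ have the same null sets, which you do not supply. The remainder of your outline---strict positivity of the resulting Borel density, the equivalence of $\nugrho$ and $\nugrho^{-1}$, and choosing $\delta$ to be a Borel homomorphism via \cite{wil:toolkit}*{Proposition~7.6}---agrees with the paper.
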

\begin{proof}
  The proof is similar to that for Lemma~\ref{lem-mug-quasi}.  In
  particular, let $g\in C^{+}(H)$ be nowhere vanishing with
  $\lambdah(g)(v)=1$ for all $v\in\ho$.  Let
  $\nugrho= \sigma\circ \nu$.  Then
  \begin{align}
    \label{eq:87}
    \nugrho^{-1}(f)\hskip-1cm&\\
                             &= \int_{\ho}\int_{G} f(x^{-1}\cdot v,x^{-1})
                               \,d\lambdag^{\rho(v)}(x)\,d\mu(v) \\
                             &= \int_{\ho}\int_{G}\int_{H} \underbrace{f(x^{-1}\cdot
                               r_{H}(h),x^{-1})g(h)}_{F(h,x)} 
                               \,d\lambdah^{v}(h) \,d\lambdag^{\rho(v)}(x) \,d\mu(v) \\
                             &=\int_{\ho}\int_{G}\int_{H}F(h,x)  \,d\lambdah^{v}(h)
                               \,d\lambdag^{\rho(v)}(x) \,d\mu(v) \\
                             &=\int_{\ho}\int_{G}\int_{H} F(x^{-1}\cdot h^{-1},x^{-1})
                               \Delta(x^{-1}\cdot h^{-1},x^{-1})   \,d\lambdah^{v}(h)
                               \,d\lambdag^{\rho(v)}(x) \,d\mu(v) \\
                             &= \int_{\ho}\int_{G}f(v,x)\underbrace{\Bigl(\int_{H}  b(x^{-1}\cdot h^{-1})
                               \Delta(x^{-1}\cdot h^{-1},x^{-1})   \,d\lambdah^{v}(h)  \Bigr)}_{B(x)}\\
                               &\hskip3in
                               \,d\lambdag^{\rho(v)}(x) \,d\mu(v) \\
                             &=\nugrho(f B).
  \end{align}

  Thus $\nugrho$ and $\nugrho^{-1}$ are equivalent and $\nu$ is
  $\grho$-quasi-invariant.  As usual, we can then choose the modular
  function $\delta$ to be a Borel homomorphism.
\end{proof}

\begin{lemma}
  \label{lem-mu-h-quasi} The measure $\mu$ is also quasi-invariant
  when $\ho$ is viewed as the unit space of $H$.
\end{lemma}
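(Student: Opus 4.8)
The plan is to prove quasi-invariance of $\mu$ for $H$ by a direct computation in the spirit of Lemma~\ref{lem-mug-quasi} and Lemma~\ref{lem-phi-equi}, feeding the already-established quasi-invariance of $\mu$ for $\sd HG$ and for $\grho$ back into the groupoid $H$. Write $\nu_{H}=\mu\circ\lambdah$; the goal is to show that $\nu_{H}$ and $\nu_{H}^{-1}$ are equivalent measures on $H$. First I would fix a strictly positive $g\in C^{+}(G)$, nowhere vanishing, normalized so that $\int_{G}g(x)\,d\lambdag^{u}(x)=1$ for every $u\in\go$ (the $G$-analogue of the normalization used before, via \cite{wil:toolkit}*{Lemma~3.19}). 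Inserting this factor of $1$ over the $G$-direction lets me rewrite $\nu_{H}^{-1}(f)$, for a non-negative Borel function $f$ on $H$, as the integral over $\sd HG$ of $F(h,x)=f(h^{-1})\,g(x)$ against $\mu\circ\lambda$.

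Next I would invoke the quasi-invariance of $\mu$ for $\sd HG$, which replaces the integrand $F(h,x)$ by $F\bigl((h,x)^{-1}\bigr)\,\Delta\bigl((h,x)^{-1}\bigr)=F(x^{-1}\cdot h^{-1},x^{-1})\,\Delta(x^{-1}\cdot h^{-1},x^{-1})$. Because the $G$-action is by groupoid isomorphisms it commutes with inversion, so writing $k=x^{-1}\cdot h$ gives $x^{-1}\cdot h^{-1}=k^{-1}$ and $(x^{-1}\cdot h^{-1})^{-1}=k$; thus the integrand becomes $f(k)\,g(x^{-1})\,\Delta(k^{-1},x^{-1})$ viewed as a function $G(x^{-1}\cdot h)$ of $h$, where $G(k)=f(k)\,g(x^{-1})\,\Delta(k^{-1},x^{-1})$. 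Applying the invariance of $\lambdah$ to the inner integral then moves the twist off the argument entirely, at the cost of changing the base point of $\lambdah$ from $v$ to $x^{-1}\cdot v$, so that the inner integral becomes $\int_{H}f(k)\,g(x^{-1})\,\Delta(k^{-1},x^{-1})\,d\lambdah^{x^{-1}\cdot v}(k)$.

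The key maneuver is to absorb that shifted base point. Packaging the inner integral as a function $\Psi$ on $\grho$ evaluated at $(x^{-1}\cdot v,x^{-1})$, dividing by the strictly positive modular function $\delta$ of $\grho$, and applying the quasi-invariance of $\mu$ for $\grho$ from Lemma~\ref{lem-phi-equi} (equation \eqref{eq:97}) returns the base point to $v$ and $x^{-1}$ to $x$, introducing a factor $\delta(v,x)^{-1}$. After Fubini and using that $\lambdah^{v}$ is supported where $r(k)=v$ (so that $v$, and hence $\rho(v)=\rho(k)$, is determined by $k$ on the support), the $G$-integral collapses into a single factor
\[
B(k)=\int_{G}\Delta(k^{-1},x)\,g(x)\,\delta(r(k),x)^{-1}\,d\lambdag^{\rho(k)}(x),
\]
yielding $\nu_{H}^{-1}(f)=\nu_{H}(fB)$. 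Since $g>0$ and $\Delta,\delta$ take values in the positive reals, $B$ is a Borel function with values in $(0,\infty]$, so a Borel set is $\nu_{H}$-null if and only if it is $\nu_{H}^{-1}$-null; hence $\nu_{H}\sim\nu_{H}^{-1}$ and $\mu$ is quasi-invariant for $H$.

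I expect the main obstacle to be the bookkeeping in the middle step: tracking the twist $x^{-1}\cdot(\,\cdot\,)$ simultaneously through the argument of $f$ and through the modular factor $\Delta(x^{-1}\cdot h^{-1},x^{-1})$, and then verifying that trading $\lambdah$-invariance against $\grho$-quasi-invariance leaves behind modular factors $\Delta(k^{-1},x)$ and $\delta(r(k),x)^{-1}$ that genuinely assemble into a strictly positive Borel function of $k$ alone. Since the explicit formula for $\Delta$ is only established later in Proposition~\ref{prop-formula-Delta}, the positivity and Borel measurability of $B$ must be argued abstractly from the positivity of the three ingredients $g$, $\Delta$, and $\delta$, rather than by computing $\Delta$ outright.
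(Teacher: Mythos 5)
Your proposal is correct and takes essentially the same route as the paper's proof: insert a normalized, strictly positive $b\in C^{+}(G)$ over the $G$-direction, apply the $\sd HG$-quasi-invariance of $\mu$ with modular function $\Delta$, use $x^{-1}\cdot h^{-1}=(x^{-1}\cdot h)^{-1}$ together with invariance of $\lambdah$ to shift the base point, undo that shift with the $\grho$-quasi-invariance from Lemma~\ref{lem-phi-equi} at the cost of a $\delta$-factor, and collapse the $G$-integral into a strictly positive Borel function. The paper simply runs the computation in the reverse direction, obtaining $\nu_{H}(f)=\nu_{H}^{-1}(fD)$ where you obtain $\nu_{H}^{-1}(f)=\nu_{H}(fB)$ with essentially the same positive density.
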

\begin{proof}
  Let $f$ be a non-negative Borel function and then use
  \cite{wil:toolkit}*{Proposition~3.19} to find $b\in C^{+}(G)$ such
  that $\lambdag(b)(u)=1$ for all $u\in\go$ and such that $b$ never
  vanishes.  Then
  \begin{align}
    \label{eq:53}
    \nu_{H}(f)
    &=\int_{\ho}\int_{H} f(h) \,d\lambdah^{v}(h) \,d\mu(v) \\
    &=\int_{\ho}\int_{G}\int_{H}f(h) b(x)
      \,d\lambdah^{v}(h)\,d\lambdag^{\rho(v)}(x) \,d\mu(v) \\
    \intertext{which, since $\mu$ is $\sd HG$-quasi-invariant, is}
    &=\int_{\ho}\int_{G}\int_{H} f(x^{-1}\cdot h^{-1}) b(x^{-1})
      \Delta(x^{-1}\cdot h^{-1},x^{-1}) \,d\lambdah^{v}(h)
      \,d\lambdag^{\rho(v)}(x) \,d\mu(v) \\
    \intertext{which, since $x^{-1}\cdot h^{-1}=(x^{-1}\cdot h)^{-1}$
    and since $\lambdah$ is invariant, is}
    &=\int_{\ho}\int_{G}\underbrace{\Bigl( \int_{H} f(h^{-1}) b(x^{-1})
      \Delta(h^{-1},x^{-1}) \,d\lambdah^{x^{-1}\cdot v}(h)
      \Bigr)}_{C(x^{-1}\cdot v,x^{-1})}
      \,d\lambdag^{\rho(v)}(x) \,d\mu(v) \\
    &= \int_{\ho}\int_{G} C(v,x) \delta(v,x)^{-1} \,d\lambdag^{\rho(v)}(x)
      \,d\mu(v) \\
    &= \int_{\ho}\int_{G} \int_{H} f(h^{-1})b(x) \Delta(h^{-1}, x)
      \delta(v,x) \,d\lambdah^{v}(h)  \,d\lambdag^{\rho(v)}(x)
      \,d\mu(v) \\
    &= \int_{\ho}\int_{H} f(h^{-1}) \underbrace{\Bigl( \int_{G}b(x)\Delta(h^{-1},x)
      \delta(v,x)^{-1} \,d\lambdag^{\rho(v)}(x) \Bigr)}_{D(h^{-1})}
      \,d\lambdah^{v}(h) 
      \,d\mu(v) \\
    &= \int_{\ho}\int_{H}f(h^{-1})D(h^{-1}) \,d\lambdah^{v}(h)
      \,d\mu(v) =\nu_{H}^{-1}(fD).
  \end{align}
  The result follows as $D$ takes values in $(0,\infty]$.
\end{proof}

In view of Lemma~\ref{lem-mu-h-quasi}, we can write $\Delta_{H}$ for
the modular function on $H$ when $\mu$ is viewed as a quasi-invariant
measure on $\ho\subset H$.  Recall that we are using $\delta$ for the
modular function on $\grho$ from Lemma~\ref{lem-phi-equi}.

\begin{prop}
  \label{prop-formula-Delta} We can take
  $\Delta_{H}(h)=\Delta(h,\rho(h))$ and $\delta(v,x)=\Delta(v,x)$.
  Then for all $(h,x)\in \sd HG$, we have
  \begin{align}
    \label{eq:118}
    \Delta(h,x)=\Delta_{H}(x)\delta(s_{H}(h),x).
  \end{align}
\end{prop}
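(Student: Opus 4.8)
The plan is to reduce the identity to the single fact that $\Delta$ is an \emph{everywhere-defined} Borel homomorphism on $\hsdg$, by exhibiting an algebraic factorization of a general element. For $(h,x)\in\hsdg$ I claim
\[
  (h,x)=\bigl(h,\rho(h)\bigr)\bigl(s_H(h),x\bigr).
\]
Both factors lie in $\hsdg$: indeed $\rho(h)=r(\rho(h))$ since $\rho(h)\in\go$, and $\rho(s_H(h))=\rho(h)=r(x)$ because $\rho$ is a groupoid bundle. They are composable, since the condition of \defnref{def-sd-product} becomes $s_H(h)=\rho(h)\cdot s_H(h)$, which holds as the unit $\rho(h)$ acts trivially; and \eqref{eq:37} gives $\bigl(h,\rho(h)\bigr)\bigl(s_H(h),x\bigr)=\bigl(h(\rho(h)\cdot s_H(h)),\rho(h)x\bigr)=(h,x)$, again using $\rho(h)=r(x)$.

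First I would check that $\iota_H\colon h\mapsto(h,\rho(h))$ is a groupoid homomorphism of $H$ into $\hsdg$ and that $\grho\hookrightarrow\hsdg$ is the inclusion of a closed subgroupoid; both are routine from \eqref{eq:37} once one notes units act trivially. Hence $h\mapsto\Delta(h,\rho(h))$ and $(v,x)\mapsto\Delta(v,x)$ are genuine (everywhere-defined) Borel homomorphisms, on $H$ and on $\grho$ respectively. \emph{Granting} that these are admissible representatives of the modular functions produced in \lemref{lem-mu-h-quasi} and \lemref{lem-phi-equi} --- i.e.\ that they agree off a null set with $\Delta_H$ and $\delta$ --- we may define $\Delta_H(h):=\Delta(h,\rho(h))$ and $\delta(v,x):=\Delta(v,x)$. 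With these choices the asserted formula is immediate, for applying the homomorphism $\Delta$ to the factorization yields
\[
  \Delta(h,x)=\Delta(h,\rho(h))\,\Delta(s_H(h),x)=\Delta_H(h)\,\delta(s_H(h),x);
\]
and, crucially, this is an identity between honest homomorphisms, so it holds for \emph{every} $(h,x)$ rather than merely almost everywhere --- which is exactly the everywhere-validity the proposition needs.

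The substantive work, and \textbf{the main obstacle}, is justifying the parenthetical granting: that $\Delta(\cdot,\rho(\cdot))$ and $\Delta|_{\grho}$ really are modular functions for $\mu$ on $H$ and on $\grho$. Since modular functions are unique up to a $\nu$-null set, it is enough to verify that each satisfies the defining quasi-invariance identity with the stated Jacobian, and I would do this by lifting a test function on the subgroupoid to $\hsdg$ against a transverse function normalized by $\lambdag(b)\equiv1$ (resp.\ $\lambdah(g)\equiv1$), applying the quasi-invariance of $\mu$ on $\hsdg$ together with the product form $d\lambdah^{v}\,d\lambdag^{w}$ of its Haar system (\lemref{lem-haar-sdp}), and then integrating the transverse variable back out. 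The $H$-case is comparatively clean because inversion in $\hsdg$ preserves $\iota_H(H)$, namely $(h,\rho(h))\inv=(h\inv,\rho(h\inv))$. The $\grho$-case is the delicate one: since $(h,x)\inv=(x\inv\cdot h\inv,x\inv)$ entangles the $H$- and $G$-coordinates, inversion turns the range unit $r_H(t)$ into an expression involving $s_H$, and reconciling this requires the invariance of $\lambdah$ to absorb the resulting change of variables in the $H$-fibre. Alternatively, once $\Delta_H=\Delta(\cdot,\rho(\cdot))$ is in hand, one can bootstrap the $\grho$-identity from the formula for $\Delta_H$ already implicit in the proof of \lemref{lem-mu-h-quasi}, varying the normalizing function $b$ to force $\Delta|_{\grho}=\delta$ almost everywhere. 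Either way, the care demanded by the fact that modular functions are only defined almost everywhere is precisely the subtlety flagged before the statement.
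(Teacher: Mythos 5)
Your reduction is exactly the paper's: the factorization $(h,x)=(h,\rho(h))(s_{H}(h),x)$ together with the fact that $\Delta$ is an \emph{everywhere-defined} Borel homomorphism (this is \eqref{eq:114}) is precisely how the paper converts two almost-everywhere identifications into the everywhere-valid formula, and you correctly locate the entire difficulty in your ``granting'' step. The gap is that the plan you sketch for that step is circular. If you test the candidate Jacobian $\Delta(\cdot,\rho(\cdot))$ for $\mu$ on $H$ by lifting $f\tensor b$ to $\sd HG$ and applying quasi-invariance of $\nu$, the inversion $(h,x)\inv=(x\inv\cdot h\inv,x\inv)$ produces the factor $\Delta(x\inv\cdot h\inv,x\inv)$, which by your own factorization equals
\[
  \Delta\bigl(x\inv\cdot s_{H}(h),x\inv\bigr)\,\Delta\bigl(h\inv,\rho(h\inv)\bigr),
\]
and to ``integrate the transverse variable back out'' you must absorb the first factor via the quasi-invariance identity on $\grho$ --- which you can only do if you already know $\delta=\Delta|_{\grho}$ almost everywhere. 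Conversely, identifying $\delta$ with $\Delta|_{\grho}$ (the paper's computation \eqref{eq:88}) uses that $\Delta(\cdot,\rho(\cdot))$ is a modular function for $\mu$ on $H$. So each of your two verifications presupposes the other, and your fallback (``bootstrap the $\grho$-identity once $\Delta_{H}=\Delta(\cdot,\rho(\cdot))$ is in hand'') begs the question of how the $H$-case gets in hand. Your observation that inversion preserves $\iota_{H}(H)$ does not rescue this: quasi-invariance of $\mu$ on $\sd HG$ is a statement about $\nu=\mu\circ\lambda$ with $\lambda^{(v,w)}=\lambdah^{v}\times\lambdag^{w}$, and the fibre measures $\lambdag^{w}$ do not concentrate on units, so the inversion of $\nu$ cannot be tested along the subgroupoid alone.

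The missing idea is the paper's decoupling device. It first proves the joint a.e.\ identity $\Delta(h,x)=\Delta_{H}(h)\delta(s_{H}(h),x)$ --- with $\Delta_{H}$ and $\delta$ the \emph{abstract} modular functions from \lemref{lem-mu-h-quasi} and \lemref{lem-phi-equi} --- by the single chained computation \eqref{eq:98}. Comparing with the everywhere identity from the factorization yields
\[
  \Delta_{H}(h)\Delta(h,\rho(h))\inv=\Delta(s_{H}(h),x)\,\delta(s_{H}(h),x)\inv
  \quad\text{$\nu$-a.e.,}
\]
a function of $h$ alone equal to a function of $(s(h),x)$ alone; \lemref{lem-tech-ae} then separates variables to give $\Delta_{H}(h)=\Delta(h,\rho(h))A(s(h))$ $\nu_{H}$-a.e.\ for a Borel function $A$ on $\ho$, and the a.e.\ multiplicativity of modular functions (\cite{wil:toolkit}*{Lemma~7.5}) forces $A\circ s=1$ $\nu_{H}$-a.e. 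Only with the $H$-identification established does the $\grho$-identification follow, via \eqref{eq:88}. Without \lemref{lem-tech-ae} or some substitute that breaks the mutual dependence between the two identifications, the ``substantive work'' you flag cannot be completed along the route you describe.
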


For the proof, we need the following observation.

\begin{lemma}
  \label{lem-tech-ae} Suppose that $f$ and $\phi$ are bounded
  non-negative Borel functions on $H$ and $\grho$, respectively, and
  that
  \begin{align}
    \label{eq:83}
    f(h)=\phi(s(h),x)\quad\text{for $\nu$-almost all $(h,x)\in\sd HG$.}
  \end{align}
  Then there is a non-negative bounded Borel function on $A$ on $\ho$
  such that
  \begin{align}
    \label{eq:84}
    f(h)=A(s(h))\quad\text{for $\nu_{H}$-almost all $h\in H$.}
  \end{align}
\end{lemma}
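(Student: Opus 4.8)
The plan is to construct $A$ by averaging $\phi$ in the $G$-variable against a normalized cutoff, and then to use a Fubini-type disintegration to check that this average returns $f(h)$ for $\nu_{H}$-almost every $h$.

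First I would fix, exactly as in the proof of \lemref{lem-mu-h-quasi}, a strictly positive $b\in C^{+}(G)$ with $\int_{G}b(x)\,d\lambdag^{u}(x)=1$ for every $u\in\go$ (using \cite{wil:toolkit}*{Proposition~3.19}), and define
\[
A(v)=\int_{G}\phi(v,x)\,b(x)\,d\lambdag^{\rho(v)}(x)\qquad(v\in\ho).
\]
For $x$ in the support of $\lambdag^{\rho(v)}$ one has $r(x)=\rho(v)$, so $(v,x)\in\grho$ and the integrand is meaningful; since $\phi$ is bounded and non-negative and $v\mapsto\lambdag^{\rho(v)}$ is a continuous family of measures (cf.\ \cite{wil:toolkit}*{\S3.1}), $A$ is a bounded non-negative Borel function on $\ho$, with $0\le A\le\|\phi\|_{\infty}$.

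Next I would set $E=\set{(h,x)\in\sd HG:f(h)\ne\phi(s(h),x)}$, so that hypothesis \eqref{eq:83} says precisely $\nu(E)=0$. The point is that the induced measure $\nu=\mu\circ\lambda$ on $\sd HG$ disintegrates over $\nu_{H}=\mu\circ\lambdah$: because the Haar system $\lambda$ on $\sd HG$ is the product $d\lambdah^{v}\,d\lambdag^{w}$ of \lemref{lem-haar-sdp}, and because $\rho(h)=\rho(r(h))$ for every $h$ by \remref{rem-sat}, Tonelli gives
\[
\int_{\sd HG}F\,d\nu=\int_{H}\Bigl(\int_{G}F(h,x)\,d\lambdag^{\rho(h)}(x)\Bigr)\,d\nu_{H}(h)
\]
for non-negative Borel $F$. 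Taking $F=\chi_{E}$ yields $\nu(E)=\int_{H}g\,d\nu_{H}$, where $g(h)=\lambdag^{\rho(h)}(\set{x:(h,x)\in E})\ge0$. Since $\nu(E)=0$, we get $g=0$ $\nu_{H}$-almost everywhere, i.e.\ for $\nu_{H}$-almost every $h$ the equality $f(h)=\phi(s(h),x)$ holds for $\lambdag^{\rho(h)}$-almost every $x$.

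For each such $h$, using $\rho(s(h))=\rho(h)$ and the normalization of $b$,
\[
A(s(h))=\int_{G}\phi(s(h),x)\,b(x)\,d\lambdag^{\rho(h)}(x)=f(h)\int_{G}b(x)\,d\lambdag^{\rho(h)}(x)=f(h),
\]
which is exactly \eqref{eq:84}. The one step requiring care is the disintegration of $\nu$ over $\nu_{H}$ with fibre measures $\lambdag^{\rho(h)}$: it is where the product form of the Haar system on $\sd HG$ and the fact that $\rho$ is constant on the $H$-fibres of $\sd HG$ are both used. Everything else---the existence and normalization of $b$, the boundedness and Borel measurability of $A$, and the final evaluation---is routine.
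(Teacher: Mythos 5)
Your proposal is correct and takes essentially the same approach as the paper: you construct $A$ by the identical averaging formula $A(v)=\int_{G}\phi(v,x)b(x)\,d\lambdag^{\rho(v)}(x)$ with a normalized $b$ from \cite{wil:toolkit}*{Proposition~3.19}, just as in \eqref{eq:134a}. The only (immaterial) difference is the verification step: the paper shows $\int_{\ho}\int_{H}\bigl(f(h)-A(s(h))\bigr)g(h)\,d\lambdah^{v}(h)\,d\mu(v)=0$ for arbitrary $g\in C_{c}(H)$, whereas you slice $\nu$ over $\nu_{H}$ by Tonelli using the product Haar system of \lemref{lem-haar-sdp} and conclude pointwise $\nu_{H}$-almost everywhere---both arguments resting on the same facts, namely $\rho\circ s_{H}=\rho\circ r_{H}$ from \remref{rem-sat} and the normalization of $b$.
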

\begin{proof}
  Let $b\in C_{c,r}^{+}(G)$ be such that $\lambdag(b)(u)=1$ for all
  $u\in\go$.  Then let
  \begin{align}
    \label{eq:134a}
    A(v)=\int_{G} \phi(v,x) b(x)\,d\lambdag^{\rho(v)} (x).
  \end{align}
  Let $g\in C_{c}(H)$.  Then
  \begin{align}
    \label{eq:76a}
    \int_{\ho}\int_{H}
    &\bigl(f(h)-A(s(h))\bigr)g(h) \,d\lambdah^{v}(h)
      \,d\mu(v) \\
    &= \int_{\ho}\int_{H}\int_{G} f(h)g(h)b(x)
      \,d\lambdag^{\rho(v)}(x) \,d\lambdah^{v}(h) \,\mu(v) \\
    &\hskip1in - \int_{\ho}
      \int_{H} \phi(s(h),x)b(x) g(h)\,d\lambdag^{\rho(s(h))}(x)
      \,d\lambdah^{v}(h) \,d\mu(v) \\
    \intertext{which, since $r(h)=v$ implies $\rho(s(h))=\rho(v)$, is}
    &= \int_{\ho}\int_{G}\int_{H} \bigl(f(h)-\phi(s(h),x)\bigr) b(h) g(h)
      \,d\lambdah^{v}(h)\,d\lambdag^{\rho(v)}(x)\,d\mu(v) =0
  \end{align}

  Since $g\in C_{c}(H)$ is arbitrary, the result follows.
\end{proof}

\begin{proof}[Proof of Proposition~\ref{prop-formula-Delta}]
  We have
  \begin{align}
    \label{eq:98}
    \nu(f)
    &=\int_{\ho}\int_{G}\underbrace{\Bigl(\int_{H}f(h,x)
      \,d\lambdah^{v}(h)\Bigr)}_{\phi(v,x)} 
      \,d\lambdag^{\rho(v)} (x) \,d\mu(v) \\
    \intertext{which, in view of Lemma~\ref{lem-phi-equi}, is}
    &= \int_{\ho}\int_{G} \int_{H} f(h,x^{-1})
      \,d\lambdah^{x^{-1}\cdot v}(h)  \delta(x^{-1}\cdot v,x^{-1})
      \,d\lambdag^{\rho(v)} (x) 
      \,d\mu(v) \\
    \intertext{which, by invariance, is}
    &= \int_{\ho}\int_{G} \int_{H} f(x^{-1}\cdot
      h,x^{-1})\delta(x^{-1}\cdot v, x^{-1})
      \,d\lambdah^{v}(h)  \,d\lambdag^{\rho(v)} (x) 
      \,d\mu(v) \\
    &=\int_{\ho}\int_{H}\underbrace{\Bigl( \int_{G} f(x^{-1}\cdot
      h,x^{-1})\delta(x^{-1}\cdot r_{H}(h) ,x^{-1})
      \,d\lambdag^{\rho(v)}(x)\Bigr)}_{E(h)} \,d\lambdah^{v}(h)
      \,d\mu(v) \\
    \intertext{which, since $\mu$ is $H$-quasi-invariant, is}
    &= \int_{\ho}\int_{H}\int_{G} f(x^{-1}\cdot h^{-1},x^{-1})
      \delta(x^{-1}\cdot s_{H}(h),x^{-1}) \,d\lambdag^{\rho(v)}(x)
      \Delta_{H}(h^{-1}) \\
      &\hskip3in 
      \,d\lambdah^{v}(h) \,d\mu(v) \\
    &= \int_{\ho}\int_{G}\int_{H} f(x^{-1}\cdot h^{-1},x^{-1})
      \Delta_{H}(h^{-1}) \delta(x^{-1}\cdot s_{H}(h),x^{-1}) \\
    &\hskip3in
      \,d\lambdah^{v}(h) \,d\lambdag^{\rho(v)}(x) \,d\mu(v).
  \end{align}
  Therefore
  \begin{align}
    \label{eq:99}
    \Delta(x^{-1}\cdot
    h^{-1},x^{-1})=\Delta_{H}(h^{-1})\delta(x^{-1}\cdot s_{H}(h),x^{-1})
  \end{align}
  $\nu$-almost everywhere.  Since $\Delta$, $\Delta_{H}$, and $\delta$
  are homomorphisms, we have
  \begin{align}
    \label{eq:100}
    \Delta(h,x)=\Delta_{H}(h)\delta(s_{H}(h),x)
  \end{align}
  $\nu$-almost everywhere as claimed.

  However, if $(h,x)\in\sd HG$, then $\rho(h)=r(x)$ and $(h,k)$ may be written as 
  $(h,x)=(h,r(x))(s_{H}(h),x)=(h,\rho(h))(s_{H}(h),x)$.  Since
  $\Delta$ is a homomorphism
  \begin{align}
    \label{eq:114}
    \Delta(h,x)=\Delta(h,\rho(h))\Delta(s_{H}(h),x).
  \end{align}
  Therefore it will suffice to see that
  $ \Delta_{H}(h)=\Delta(\rho(h),h)$ off a $\nu_{H}$-null set that
  $\delta( v,x)=\Delta(v,x)$ off a $\nu_{\grho}$-null set.

  It follows from \eqref{eq:114} that
  \begin{align}
    \label{eq:85}
    \Delta_{H}(h)\Delta(h,\rho(h))^{-1} = \Delta(s_{H}(h),x)\delta(s_{H}(h),x)^{-1}
  \end{align}
  for $\nu$-almost all $(h,x)$.  Therefore Lemma~\ref{lem-tech-ae}
  implies that there is a bounded Borel function $A$ on $\ho$ such
  that
  \begin{align}
    \label{eq:86}
    \Delta_{H}(h)=\Delta(h,\rho(h))A(s(h)) \quad\text{$\nu_{H}$-almost
    everywhere.} 
  \end{align}

  Therefore $\Delta'_{H}(h)=\Delta(h,\rho(h))$ is a modular function
  on $H$ for $\mu$ and hence must be multiplicative for
  $\nu_{H}^{(2)}$-almost all $(h,k)\in H^{(2)}$ by
  \cite{wil:toolkit}*{Lemma~7.5}.  It follows that $A(s(h))=1$ for
  $\nu_{H}^{(2)}$-almost all $(h,k)\in H^{(2)}$.  But then for any
  $g\in C_{c}(H)$ and an appropriate $b\in C^{+}(G)$, we have
  \begin{align}
    \label{eq:78}
    \int_{\ho}\int_{H}& \bigl(A(s(h)-1\bigr)g(h)
                        \,d(\lambda_{H})_{v}(h)\,d\mu(h) \\
                      &= \int_{\ho}\int_{H}\int_{H}\Bigl(A(s(h))-1\bigr) g(h)b(k)
                        \,d(\lambda_{H})_{v}(h) \,d\lambdah^{v}(k) \,d\mu(v) =0.
  \end{align}
  Therefore $A(s(h))=1$ $\nu_{H}^{-1}$-almost everywhere, and hence
  $\nu_{H}$-almost everywhere.

  Hence we immediately have $\Delta(h,\rho(h))=\Delta_{H}(h)$
  $\nu_{H}$-almost everywhere as required.

  Now let $f\in C_{c}(\sd HG)$.  Then
  \begin{align}
    \label{eq:88}
    \int_{\ho}&\int_{G}\int_{H}
                f(h,x)
                \,d\lambdah^{v}(h)\,d\lambdag^{\rho(v)} (x)\,d\mu(v) \\
              &= \int_{\ho}\int_{G}\Bigl( \int_{H} f(x^{-1}\cdot h^{-1},x^{-1})
                \Delta(x^{-1}\cdot h^{-1},x^{-1}) \,d\lambdah^{v}(h) \Bigr)
                \,d\lambdag^{\rho(v)} (x) \,d\mu(v) \\
              &= \int_{\ho}\int_{G} \int_{H} f(h^{-1},x^{-1})
                \Delta(h^{-1}, x^{-1}) 
                \,d\lambdah^{x^{-1}\cdot v}(h) 
                \,d\lambdag^{\rho(v)} (x) \,d\mu(v) \\
              &= \int_{\ho}\int_{G} \underbrace{\Bigl(\int_{H} f(h^{-1},x^{-1})
                \Delta(h^{-1}, x^{-1}) 
                \,d\lambdah^{x^{-1}\cdot v}(h) \Bigr)}_{C(x^{-1}\cdot v,x^{-1})}
                \,d\lambdag^{\rho(v)} (x) \,d\mu(v)   \\
              &=\int_{\ho}\int_{G}C(v,x)\delta(v,x)^{-1} \,d\lambdag^{\rho(v)} (x)
                \,d\mu(v) \\
              &= \int_{\ho} \int_{G} \int_{H} f(h^{-1},x) \Delta(h^{-1},x)
                \delta(v,x)^{-1} \,d\lambdah^{v}(h) \,d\lambdag^{\rho(v)}(x)
                \,d\mu(v) \\
              &= \int_{\ho} \int_{G} \int_{H} f(h^{-1},x)
                \Delta(h^{-1},\rho(h^{-1})) \Delta(v,x)
                \delta(v,x)^{-1} \\
                &\hskip3in \,d\lambdah^{v}(h) \,d\lambdag^{\rho(v)}(x)
                \,d\mu(v) \\
    \intertext{which, since $\Delta_{H}'(h)=\Delta(h,\rho(h))$ is a
    modular function for $\mu$, is}
              &=\int_{\ho}\int_{G}\int_{H} f(h,x) 
                \,d\lambdah^{v}(h) \Delta(v,x)\delta(x,v)^{-1}
                \,d\lambdag^{\rho(v)}(x) \,d\mu(v).
  \end{align}
  Since $f\in C_{c}(\sd HG)$ is arbitrary, it follows that
  $\Delta(v,x)=\delta(v,x)$ for $\nu_{\grho}$-almost all $(v,x)$ as
  required.

  This completes the proof.
\end{proof}

From this point forward we will assume
$\Delta_{H}(h)=\Delta(h,\rho(h))$ and $\delta(v,x)=\Delta(v,x)$, and
hence that $\Delta(h,x)=\Delta_{H}(h)\delta(s(h),x)$ for all
$(h,x)\in\sd HG$.

Since $\mu_{G}$ is a quasi-invariant measure on $\go$, whenever
$U\subset\go$ is $\mu_{G}$-conull
the reduction
$G|_{U}=\set{x\in G:\text{$r(x)\in U$ and $s(x)\in U$}}$ is
$\nu_{G}$-conull (see \cite{wil:toolkit}*{Remark~7.8}).  Muhly has
called such reductions \emph{inessential}.

\begin{prop}\label{prop-key}
  There is a $\mu_{G}$-conull set $U\subset \go$ such that for all
  $x\in G|_{U}$ we have
  $\mu^{s(x)}=\delta(\cdot, x^{-1}) \Delta_{G}(x) (x^{-1}\cdot
  \mu^{r(x)})$.  That is, for all suitable Borel functions $\phi$ on
  $\ho$ and $x$ in the $\nu_{G}$-conull reduction $G|_{U}$, we have
  \begin{align}
    \label{eq:101}
    \int_{\ho} \phi(v) \,d\mu^{s(x)}(v) =\int_{\ho} \phi(x^{-1}\cdot
    v) \delta(x^{-1}\cdot v,x^{-1})\Delta_{G}(x) \,d\mu^{r(x)}(v).
  \end{align}
\end{prop}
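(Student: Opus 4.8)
The plan is to establish the Radon--Nikodym identity \eqref{eq:101} in integrated form, by testing both sides against functions on $\ho$ and on $G$ and then reducing everything to the two quasi-invariance relations already proved for $\mu$. Fix $\phi\in C_c(\ho)$ and $\psi\in C_c(G)$ and set
\[
  I_1=\int_G\psi(x)\Bigl(\int_\ho\phi(v)\,d\mu^{s(x)}(v)\Bigr)\,d\nu_G(x),
\]
\[
  I_2=\int_G\psi(x)\Bigl(\int_\ho\phi(x^{-1}\cdot v)\,\delta(x^{-1}\cdot v,x^{-1})\,\Delta_G(x)\,d\mu^{r(x)}(v)\Bigr)\,d\nu_G(x).
\]
Since $\nu_G=\mu_G\circ\lambdag$ and $C_c(G)$ is rich enough to detect $\nu_G$-a.e.\ equality of the two (Borel, locally integrable) functions of $x$ appearing in \eqref{eq:101}, proving the identity for $\nu_G$-almost every $x$ amounts to showing $I_1=I_2$ for all such $\phi$ and $\psi$.

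For $I_2$, note that $x$ is integrated against $\lambdag^u$, so $r(x)=u$ and $\mu^{r(x)}=\mu^u$; the disintegration \eqref{eq:79} then collapses $\int_\go\int_\ho(\cdot)\,d\mu^u\,d\mu_G(u)$ back to $\int_\ho(\cdot)\,d\mu$, using that $\mu^u$ is supported where $\rho=u$ so that $\lambdag^u=\lambdag^{\rho(v)}$. This rewrites $I_2$ as $\int_\ho\int_G \psi(x)\Delta_G(x)\phi(x^{-1}\cdot v)\,\delta(x^{-1}\cdot v,x^{-1})\,d\lambdag^{\rho(v)}(x)\,d\mu(v)$, which is exactly the left-hand side of \lemref{lem-phi-equi} for $\Phi(v,x)=\psi(x^{-1})\Delta_G(x^{-1})\phi(v)$; applying that lemma converts $I_2$ into $\int_\ho\int_G\psi(x^{-1})\Delta_G(x^{-1})\phi(v)\,d\lambdag^{\rho(v)}(x)\,d\mu(v)$. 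For $I_1$, I would first invoke the $G$-quasi-invariance of $\mu_G$ from \lemref{lem-mug-quasi} to substitute $x\mapsto x^{-1}$: this produces the factor $\Delta_G(x^{-1})$ and turns $\mu^{s(x)}$ into $\mu^{s(x^{-1})}=\mu^{r(x)}=\mu^u$. Collapsing the disintegration exactly as before then shows that $I_1$ equals the very same expression, whence $I_1=I_2$.

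With $I_1=I_2$ for all $\phi,\psi$, for each fixed $\phi$ the two integrands in \eqref{eq:101} agree for $\nu_G$-almost every $x$; taking a countable family of $\phi$ dense in $C_c(\ho)$ and intersecting the resulting conull sets yields a single $\nu_G$-conull set off which the measures $\mu^{s(x)}$ and $\delta(\cdot,x^{-1})\Delta_G(x)(x^{-1}\cdot\mu^{r(x)})$ coincide. Finally I would pass from this $\nu_G$-conull set to an honest reduction $G|_U$ with $U\subset\go$ a $\mu_G$-conull set, invoking the inessential-reduction machinery referred to just before the statement (cf.\ \cite{wil:toolkit}*{Remark~7.8}). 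The two unfoldings are routine once the quasi-invariance lemmas are in hand; the delicate points, and where I expect the main obstacle, are the bookkeeping that the fibre measures live where $\rho=u$ (so the disintegration collapses and $\lambdag^{u}$ may be replaced by $\lambdag^{\rho(v)}$) and the upgrade from an almost-everywhere identity to one holding on a genuine reduction $G|_U$.
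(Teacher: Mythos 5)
Your derivation of the almost-everywhere identity is correct and is in substance the paper's own argument: the paper evaluates $\nugrho(\phi\otimes\psi)$ in two ways --- once directly via the disintegration \eqref{eq:79}, once via \lemref{lem-phi-equi} followed by the quasi-invariance of $\mu_{G}$ from \lemref{lem-mug-quasi} --- and then uses separability of $C_{c}(\ho)$ in the inductive limit topology to get a single $\nu_{G}$-null exceptional set; this is exactly your $I_1=I_2$ computation in a slightly different order, and your application of \lemref{lem-phi-equi} with $\Phi(v,x)=\psi(x^{-1})\Delta_{G}(x^{-1})\phi(v)$ checks out. (You should record, as the paper does implicitly, that \eqref{eq:96} and \eqref{eq:97}, stated for $C_{c}$ functions, extend to the non-negative Borel integrands you feed them, since $\delta$ and $\Delta_{G}$ are merely Borel.)

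The genuine gap is the final step, which you flagged as the expected obstacle but did not resolve: the machinery you cite runs in the wrong direction. \cite{wil:toolkit}*{Remark~7.8} says that if $U\subset\go$ is $\mu_{G}$-conull then the reduction $G|_{U}$ is $\nu_{G}$-conull; it does \emph{not} let you extract an inessential reduction inside a given $\nu_{G}$-conull Borel set $\Sigma\subset G$, and in general none exists --- if $G$ is a nondiscrete group, the only inessential reduction is $G$ itself, so no proper conull subset contains one. What makes the extraction possible here is extra algebraic structure: the paper shows that $\Sigma=\set{x\in G:\mu^{s(x)}=\delta(\cdot,x^{-1})\Delta_{G}(x)(x^{-1}\cdot\mu^{r(x)})}$ is closed under composition, i.e.\ $x,y\in\Sigma$ with $(y,x)\in G\pairs$ implies $yx\in\Sigma$, by a short computation using that $\delta$ and $\Delta_{G}$ are \emph{everywhere-defined} Borel homomorphisms (one reason \propref{prop-formula-Delta} took pains to arrange equalities holding everywhere rather than almost everywhere). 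Conullity together with this multiplicative closure is precisely the hypothesis of the Ramsay-type result \cite{wil:toolkit}*{Proposition~D.2}, which then produces $G|_{U}\subset\Sigma$. Without verifying the closure of $\Sigma$ under products, the upgrade from your a.e.\ identity to one holding on an honest reduction $G|_{U}$ does not go through.
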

\begin{proof}
  Suppose $\phi\in C_{c}(\ho)$ and $\psi\in C_{c}(G)$.  Then
  \begin{align}
    \label{eq:93}
    \nugrho(\phi\tensor\psi)
    &=\int_{\ho}\int_{G} \phi(v) \psi(x) \,d\lambdag^{\rho(v)}(x)
      \,d\mu(v) \\
    &= \int_{\go}\int_{\ho}\int_{G}\phi(v) \psi(x) \,d\lambdag^{u}(x)
      \,d\mu^{u}(v) \,d\mu_{G}(u) \\
    &= \int_{\go}\int_{G} \Bigl( \int_{\ho}\phi(v) \,d\mu^{u}(v)\Bigr)
      \psi(x) \,d\lambdag^{u}(x) \,d\mu_{G}(u) \\
    &=\int_{G}\Bigl(\int_{\ho}\phi(v)\,d\mu^{r(x)}(v) \Bigr) \psi(x)
      \,d\nu_{G}(x). 
  \end{align}

  On the other hand, using Lemma~\ref{lem-phi-equi},
  \begin{align}
    \label{eq:94}
    \nugrho(\phi\tensor&\psi)
                         = \int_{\ho}\int_{G} \phi(x^{-1}\cdot v)\psi(x^{-1})
                         \delta(x^{-1}\cdot v,x^{-1}) \,d\lambdag^{\rho(v)} (x) \,d\mu(v)
    \\
                       &=\int_{\go}\int_{G}\underbrace{\Bigl( \int_{\ho} \phi(x^{-1}\cdot v)
                         \delta(x^{-1}\cdot v,x^{-1}) \,d\mu^{r(x)}(v) \Bigr) \psi(x^{-1})}_{D(x)}\\
                         &\hskip3in
                         \,d\lambdag^{u}(x) \,d\mu_{G}(u) \\
    \intertext{which, since $\mu_{G}$ is quasi-invariant, is}
                       &= \int_{\go}\int_{G} D(x^{-1}) \Delta_{G}(x^{-1})
                         \,d\lambdag^{u}(x) \,d\mu_{G}(u) \\
                       &= \int_{\go}\int_{G}\Bigl( \int_{\ho} \phi(x\cdot v)
                         \delta(x\cdot v,x) \Delta_{G}(x^{-1})\,d\mu^{s(x)}(v) \Bigr) \psi(x)\\
                         &\hskip3in
                         \,d\lambdag^{u}(x) \,d\mu_{G}(u) \\
                       &= \int_{\go}\int_{G}\Bigl( \int_{\ho} \phi(v)
                         \delta(v,x) \Delta_{G}(x^{-1})\\
                        &hskip3in
                         \,d(x\cdot \mu^{s(x)})(v) \Bigr) \psi(x)
                         \,d\lambdag^{u}(x) \,d\mu_{G}(u) \\
                       &= \int_{G} \Bigl( \int_{\ho} \phi(v)
                         \delta(v,x) \Delta_{G}(x^{-1})\,d(x\cdot \mu^{s(x)})(v)\Bigr)
                         \psi(x) \,d\nu_{G}(x).
  \end{align}
  Since $\psi\in C_{c}(G)$ is arbitrary, there is a $\nu_{G}$-null set
  $N(\phi)$ such that
  \begin{multline}
    \label{eq:95}
    \int_{\ho}\phi(v) \,d\mu^{r(x)}(v) \\= \int_{\ho}\phi(v)
    \delta(v,x)\Delta_{G}(x^{-1}) \,d(x\cdot
    \mu^{s(x)})(v) \quad\text{if $x\notin N(\phi)$.}
  \end{multline}
  Since $C_{c}(\ho)$ is separable in the inductive limit topology
  (\cite{wil:toolkit}*{Lemma~C.2}), there is a $\nu_{G}$-null set $N$
  such that \eqref{eq:95} holds for all $\phi\in C_{c}(\ho)$.  Since
  $\nu_{G}$ and $\nu_{G}^{-1}$ are equivalent, we get \eqref{eq:101}.

  Let
  $\Sigma=\set{x\in G:\mu^{s(x)}=\delta(\cdot, x^{-1}) \Delta_{G}(x)
    (x^{-1}\cdot \mu^{r(x)})}$.  If $x,y\in \Sigma$ and
  $(y,x)\in G^{(2)}$, then
  \begin{align}
    \label{eq:102}
    \int_{\ho}\phi(v)\,d\mu^{s(x)}(v)
    &= \int_{\ho} \phi(x^{-1}\cdot v) \delta(x^{-1}\cdot v,x^{-1})
      \Delta_{G}(x) \,d\mu^{r(x)}(v) \\
    \intertext{which, since $r(x)=s(y)$, is}
    &= \int_{\ho} \phi(x^{-1}\cdot y^{-1}\cdot v) \delta(x^{-1}\cdot
      y^{-1}\cdot v,x^{-1}) \delta(y^{-1}\cdot v,y^{-1}) \\
    &\hskip2in 
      \Delta_{G}(y)\Delta_{G}(x) \,d\mu^{r(y)}(v) \\
    \intertext{which, since $\delta$ and $\Delta_{H}$ are homomorphisms,
    is}
    &= \int_{\ho}\phi((yx)^{-1}\cdot v) \delta((yx)^{-1}\cdot v,(yx)^{-1})
      \Delta_{H}(xy) \, d\mu^{r(yx)}(v).
  \end{align}
  That is, $yx\in\Sigma$.  It now follows from
  \cite{wil:toolkit}*{Proposition~D.2} that there is an inessential
  reduction $G|_{U}\subset \Sigma$.
\end{proof}

As usual, we let $H_{u}$ be the subgroupoid $\rho^{-1}(u)$ equipped
with the Haar system
$\lambda_{H_{u}}=\set{\lambdah^{v}}_{v\in \ho_{u}}$ where
$\ho_{u}=\ho\cap H_{u}$.

\begin{lemma}
  \label{lem-one-more} For $\mu_{G}$-almost all $u$, $\mu^{u}$ is
  quasi-invariant on $H_{u}^{(0)}$ and $\Delta_{H}$ restricts to a
  modular function for $\mu^{u}$ on $H_{u}$.
\end{lemma}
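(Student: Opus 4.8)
The plan is to obtain the fibrewise statement by disintegrating, along the moment map $\rho\colon H\to\go$, the quasi-invariance of $\mu$ on $H$ established in Lemma~\ref{lem-mu-h-quasi}. Write $\nu_{H_u}=\mu^u\circ\lambda_{H_u}$ for the measure on $H_u$ built from the probability measure $\mu^u$ and the Haar system $\set{\lambdah^v}_{v\in\ho_u}$. Using the disintegration \eqref{eq:79} of $\mu$ over $\rho$, together with the fact that for $v\in\ho_u$ the Haar measure $\lambdah^v$ lives on $H^v\subset H_u$, one checks the routine identity
\[
  \nu_H(F)=\int_{\go}\nu_{H_u}(F)\,d\mu_G(u)\qquad(F\in C_c(H));
\]
that is, $\nu_H$ decomposes along the fibres $H_u=\rho^{-1}(u)$ of $\rho$ with base measure $\mu_G$ and fibre measures $\nu_{H_u}$.

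The structural point that makes everything work is that inversion $h\mapsto h^{-1}$ preserves each fibre, because $\rho$ is a groupoid bundle and hence $\rho(h^{-1})=\rho(h)$. Thus inversion restricts to $H_u$, and applying the identity above to $F\circ(h\mapsto h^{-1})$ shows that $\nu_H^{-1}$ decomposes along the same fibres with the same base measure, $\nu_H^{-1}=\int_{\go}\nu_{H_u}^{-1}\,d\mu_G(u)$. On the other hand, Lemma~\ref{lem-mu-h-quasi} gives that $\mu$ is $H$-quasi-invariant with modular function $\Delta_H$, so that $\nu_H=\Delta_H\,\nu_H^{-1}$; since $\Delta_H$ is a function on $H$ whose restriction to each $H_u$ makes sense, this furnishes a second decomposition $\nu_H=\int_{\go}\bigl(\Delta_H|_{H_u}\,\nu_{H_u}^{-1}\bigr)\,d\mu_G(u)$ along the same fibres.

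To force the two decompositions to agree fibrewise I would integrate test functions of the form $h\mapsto\psi(\rho(h))f(h)$ with $\psi\in C_c(\go)$ and $f\in C_c(H)$, exactly as in the proof of Proposition~\ref{prop-key}. Arbitrariness of $\psi$ produces, for each fixed $f$, a $\mu_G$-null set off which $\int_{H_u}f\,d\nu_{H_u}=\int_{H_u}f\,\Delta_H\,d\nu_{H_u}^{-1}$; the separability of $C_c(H)$ in the inductive-limit topology (\cite{wil:toolkit}*{Lemma~C.2}) then upgrades this to a single $\mu_G$-null set $N$ such that, for every $u\notin N$,
\[
  \nu_{H_u}=\Delta_H|_{H_u}\,\nu_{H_u}^{-1}.
\]
This is precisely the assertion that $\nu_{H_u}$ and $\nu_{H_u}^{-1}$ are equivalent, i.e.\ that $\mu^u$ is quasi-invariant on $H_u^{(0)}$ with Radon--Nikodym derivative $\Delta_H|_{H_u}$. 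Since $\Delta_H$ is already a Borel homomorphism on $H$, its restriction is a Borel homomorphism on $H_u$, so $\Delta_H$ genuinely restricts to a modular function for $\mu^u$.

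I expect the one delicate point to be the passage from an almost-everywhere statement valid for each individual $f$ to a single exceptional $\mu_G$-null set of $u$ that is valid for all $f$ at once: because modular functions and the disintegration fibre measures are defined only up to null sets, all of these null sets must be controlled by one null set in $\go$. This is exactly what the separability argument accomplishes, and it is also the step where one uses the essential uniqueness of the decomposition of $\nu_H$ along $\rho$ so that the fibrewise identity is genuinely forced.
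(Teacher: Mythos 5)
Your proposal is correct and takes essentially the same route as the paper: both disintegrate $\nu_H=\mu\circ\lambdah$ over $\rho$ by integrating test functions of the form $h\mapsto\psi(\rho(h))f(h)$, invoke the $H$-quasi-invariance of $\mu$ from Lemma~\ref{lem-mu-h-quasi} to obtain $\nu_{H_u}(f)=\nu_{H_u}^{-1}(\Delta_H f)$ off an $f$-dependent $\mu_G$-null set, and then use separability of $C_c(H)$ in the inductive limit topology to collapse these into a single null set. The only (cosmetic) discrepancy is the citation: the paper uses \cite{wil:toolkit}*{Proposition~C.3} for separability of $C_c(H)$, whereas Lemma~C.2 is the $C_c(\ho)$ statement used in Proposition~\ref{prop-key}.
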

\begin{proof}
  Let $\nu_{H_{u}}=\mu^{u}\circ \lambda_{H_{u}}$.  Then if
  $f\in C_{c}(H)$ and $\phi\in C_{c}(\go)$ we have
  \begin{align}
    \label{eq:103}
    \int_{\ho}\int_{H}f(h)\phi(\rho(h)
    & \,d\lambdah^{v}(h) \,d\mu(v)
    \\
    &= \int_{\go}\int_{\ho}\int_{H_{u}} f(h) \,d\lambda_{H_{u}}^{v}(h)
      \,d\mu^{u}(v) \phi(u) \,d\mu_{G}(u) \\
    &= \int_{\go} \nu_{H_{u}}(f) \phi(u)\,d\mu_{G}(u).\\
    \intertext{On the other hand,}
    \int_{\ho}\int_{H}f(h)\phi(\rho(h))
    & \,d\lambdah^{v}(h) \,d\mu(v)\\
    &= \int_{\ho}\int_{H} f(h^{-1})\Delta_{H}(h^{-1}) \,d\lambdah^{v}(h)
      \phi(u) \,d\mu(v) \\
    &= \int_{\go}\int_{\ho} \int_{H} f(h^{-1})\Delta_{H}(h^{-1})
      \,d\lambda_{H_{u}}^{v} (h) \,d\mu^{u}(v) \phi(u)\,d\mu_{G}(u) \\
    &= \int_{\go} \nu_{H_{u}}^{-1}(\Delta_{H}f) \phi(u)\,d\mu_{G}(u).
  \end{align}
  Since $\phi$ is arbitrary, there is a $\mu_{G}$-null set $N(f)$ such
  that $\nu_{H_{u}}(f) = \nu_{H_{u}}^{-1}(\Delta_{H}f)$ if
  $u\notin N(f)$.  Since $C_{c}(H)$ is separable in the inductive
  limit topology \cite{wil:toolkit}*{Proposition~C.3}, there is a
  $\mu_{G}$-null set $N$ such that
  $\mu_{H_{u}}(f)=\nu_{H_{u}}(\Delta_{H} f)$ for all $f\in C_{c}(H)$
  provided $u\notin N$.  The result follows.
\end{proof}

Now we can finish the proof of Theorem~\ref{thm-cross-prod}.

\begin{proof}[Proof of Theorem~\ref{thm-cross-prod}]
  We let $L$ be the integrated form of $(\mu,\ho*\HH,\pi)$ as above.
  Thus if $f\in\Gamma_{c}(\sd HG,\sd\AA G)$ and
  $\xi,\eta\in L^{2}(\ho*\HH,\mu)$, then
  \begin{align}
    \label{eq:80}
    \bigl( L(f)\xi\mid \eta)
    &= \int_{\ho} \int_{G} \int_{H} \bigl(\pi(f(h,x))\xi(x^{-1}\cdot
      s_{H}(h)) \mid
      \eta(v)\bigr)\Delta(h,x)^{-\frac12} \\
    &\hskip2in d\lambdah^{v}(h)
      \,d\lambdag^{\rho(v)}(x) \,d\mu(v) \\
    &= \int_{\go}\int_{\ho} \int_{G} \int_{H} \bigl(\pi(f(h,x))\xi(x^{-1}\cdot
      s_{H}(h)) \mid
      \eta(v)\bigr)\Delta(h,x)^{-\frac12} \\
    &\hskip2in d\lambdah^{v}(h)
      \,d\lambdag^{u}(x) \,d\mu^{u}(v) \,d\mu_{G}(u) \\
    &= \int_{\go}\int_{G} \int_{\ho}\int_{H_{u}} \bigl(\pi(f(h,x))\xi(x^{-1}\cdot
      s_{H}(h)) \mid
      \eta(v)\bigr)\Delta(h,x)^{-\frac12} \\
    &\hskip2in \lambda_{H_{u}}^{v}(h) \,d\mu^{u}(v) \,\lambdag^{u}(x)
      \,d\mu_{G}(u) .
  \end{align}

  Recall that using Proposition~\ref{prop-formula-Delta} we have
  $\Delta(h,x)=\Delta_{H}(h)\delta(s_{H}(h),x)$ for all
  $(h,x)\in\sd HG$.  We may as well also replace $G$ by $G|_{U}$ and
  assume that Proposition~\ref{prop-key} holds for all $x\in G$.  We
  can also shrink $U$ a bit if necessary and assume that
  Lemma~\ref{lem-one-more} holds for all $u\in\go$ as well.

  Let $x\in G$ and $b\in \Gamma_{c}(H_{s(x)}, \AA)$.  Then we define
  $\bar \pi_{x}(b): L^{2}(\ho,\mu^{s(x)})\to L^{2}(\ho,\mu^{r(x)})$ by
  \begin{align}
    \label{eq:81}
    \bigl(&\bar\pi_{x}(b)\xi\mid
    \eta\bigr) \\
      &= \int_{\ho}\int_{H} \bigl(\pi(b(h),x)\xi(x^{-1}\cdot
      s_{H}(h)),\eta(v)\bigr)
      \Delta(h,x)^{-\half}\Delta_{G}(x)^{-\half}\\
    &\hskip2.5in d\lambda_{H_{r(x)}}^{v}(h)
      \,d\mu^{r(x)}(v) \\
    & = \int_{\ho}\int_{H} \bigl(\pi(b(h),x)\xi(x^{-1}\cdot
      s_{H}(h)),\eta(v)\bigr)
      \Delta_{H}(h)^{-\half}\delta(s_{H}(h),x)^{-\half} \\
    &\hskip2.5in 
      \Delta_{G}(x)^{\half}\,d\lambda_{H_{r(x)}}^{v}(h)
      \,d\mu^{r(x)}(v) 
  \end{align}
  Then
  \begin{align}
    \label{eq:82}
    \bigl| \bigl(\bar\pi_{x}(b)\xi\mid \eta\bigr)\bigr|^{2}
    \hskip-1cm&\\
              &\le \Bigl(\int_{\ho}\int_{H}\|b(h)\|\|\xi(x^{-1}\cdot s_{H}(h)) \|
                \|\eta(v)\| \\
              &\hskip1in \Delta_{H}(h)^{-\half}\delta(s_{H}(h),x)^{-\half}
                \Delta_{G}(x)^{\half}\,d\lambda_{H_{r(x)}}^{v}(h)
                \,d\mu^{r(x)}(v) \Bigr)^{2}\\
    \intertext{which, using the Cauchy-Schwarz inequality, is}
              &\le \Bigl( \int_{\ho}\int_{H} \|b(h)\|\|\xi(x^{-1}\cdot
                s_{H}(h))\|^{2} \\
              &\hskip1.4in \Delta_{H}(h)^{-1}\delta(s_{H}(h),x)^{-1}
                \Delta_{G}(x)\,d\lambda_{H_{r(x)}}^{v}(h)
                \,d\mu^{r(x)}(v)\Bigr) \\
              &\hskip1cm \times \Bigl( \int_{\ho}\int_{H}
                \|f(h,x)\|\|\eta(v)\|^{2}\,d\lambda_{H_{r(x)}}^{v}(h)
                \,d\mu^{r(x)}(v) \Bigr) \\
    \intertext{which, using
    Lemma~\ref{lem-one-more}, is}
              &\le \Bigl( \int_{\ho}\|\xi(x^{-1}\cdot
                v)\|^{2}\int_{H} \|b(h)\|  \,d(\lambda_{H_{r(x)}})_{v}(h)
                \\
              &\hskip2in
                \delta(x^{-1}\cdot v,x^{-1})\Delta_{G}(x) \,d\mu^{r(x)}(v)\Bigr) \\
              &\hskip1cm \times \Bigl( \int_{\ho}\int_{H}
                \|b(h)\|\|\eta(v)\|^{2}\,d\lambda_{H_{r(x)}}^{v}(h)
                \,d\mu^{r(x)}(v) \Bigr)  \\
              &\le \Bigl( \|b\|_{I,s} \int_{\ho} \|\xi(x^{-1}\cdot v)\|^{2}
                \delta(x^{-1}\cdot v,x^{-1})\Delta_{G}(x)\,d\mu^{r(x)}(v) \Bigr)
    \\
              &\hskip1cm \times \Bigl(
                \|b\|_{I,r}\int_{\ho}\|\eta(v)\|^{2}\,d\mu^{r(x)}(v) \Bigr)  \\
    \intertext{which, by Proposition~\ref{prop-key}, is}
              &=\|b\|_{I}^{2}\|\xi\|^{2}_{L^{2}(\ho,\mu^{s(x)})}
                \|\eta\|^{2}_{L^{2}(\ho,\mu^{r(x)})}  .
  \end{align}

  Therefore we can extend $\bar \pi_{x}$ to all of $\cs(H_{s(x)},\AA)$
  and define $\Pi:\BB\to \End(\ho*\HH)$ by
  $\Pi(b,x)=\bar\pi_{x}(b)$. We can view $L^{2}(\ho*\HH,\mu)$ as a
  direct integral $L^{2}(\go*\KK,\mu_{G})$ with fibres
  $K(u)=L^{2}(\ho*\HH,\mu^{u})$ as in
  \cite{danacrossed}*{Example~F.19}.  In particular, if
  $\xi\in L^{2}(\ho*\HH,\mu)$, then $\xi$ defines a Borel section
  $\underline \xi \in L^{2}(\go*\KK,\mu_{G})$ with
  $\underline \xi(u)=\xi$.  Thus
  \begin{align}
    \label{eq:105}
    \bigl(L(f)\xi\mid\eta\bigr)
    &= \int_{\go}\int_{G} \bigl(\Pi(\check
      f(x))\underline\xi(s(x))\mid\underline
      \eta(u)\bigr)\Delta_{G}(x)^{-\half} \,d\lambdag^{u}(x)
      \,d\mu_{G}(u). 
  \end{align}

  Therefore it will suffice to see that
  $\hat\Pi((b,x))=(r(x),\bar\pi_{x}(b),s(x))$ is a Borel $*$-functor
  as in \cite{mw:fell}*{Definition~4.5}.  Then we can let
  $\underline L= (\mu_{G},\go*\KK,\Pi)$.

  Linearity is clear.  To check multiplicativity, let
  $\check f(x)=(b(\cdot),x)$ and $\check g(y)= (c(\cdot),y)$ with
  $(x,y)\in G^{(2)}$.  Then
  $\check f(x)\check g(y)=(d(\cdot)\alpha_{x}(c(\cdot)),xy)$ where
  $d(\cdot):= b(\cdot)\alpha_{x}\bigl(c(\cdot)\bigr) \in
  \Gamma_{c}(H_{r(xy)},\AA)$ is given by
  \begin{align}
    \label{eq:106}
    d(h)=\int_{H} b(k)x\cdot c(x^{-1}\cdot(k^{-1}h))
    \,d\lambdah^{r(h)}(k)\quad\text{for $h\in H_{r(x)}$.}
  \end{align}

  Thus using a vector-valued version of \eqref{eq:81},
  \begin{align}
    \label{eq:109}
    \bar\pi_{x}
    &(b)(\bar\pi_{y}(c)\xi)(v) \\
    &=\int_{H}\pi(b(h),x)(\bar\pi_{y}(c)\xi) (x^{-1}\cdot s(h))
      \Delta(h,x)^{-\half} \Delta_{G}(x)^{\half}
      \,d\lambda_{H}^{v}(h) \\
    &=\int_{H} \int_{H} \pi(b(h),x) \pi(c(k), y)\xi(y^{-1}\cdot s(k))
      \Delta(k,y)^{-\half}
      \Delta_{G}(y)^{\half} \\
    &\hskip1in \Delta(h,x)^{-\half}
      \Delta_{G}(x)^{\half} \,d\lambdah^{x^{-1}\cdot s(h)}(k) \,d
      \lambdah^{v}(h) \\
    &=\int_{H} \int_{H} \pi(b(h),x) \pi(c(x^{-1}\cdot k),
      y)\xi(y^{-1}\cdot x^{-1}\cdot s(k)) \Delta(x^{-1}\cdot k,y)^{-\half}
      \Delta_{G}(y)^{\half} \\
    &\hskip1in \Delta(h,x)^{-\half}
      \Delta_{G}(x)^{\half} \,d\lambdah^{s(h)}(k) \,d
      \lambdah^{v}(h) \\
    \intertext{which, after $k\mapsto h^{-1}k$, is}
    &=\int_{H} \int_{H} \pi(b(h),x) \pi(c(x^{-1}\cdot (h^{-1}k)),
      y)\xi(y^{-1}\cdot x^{-1}\cdot s(k))
    \\
    &\hskip1in \Delta(x^{-1}\cdot(h^{-1}k),y)^{-\half}
      \Delta_{G}(y)^{\half} \\
    &\hskip1in \Delta(h,x)^{-\half}
      \Delta_{G}(x)^{\half} \,d\lambdah^{v}(k) \,d
      \lambdah^{v}(h) 
    \\
    \intertext{which, since $\Delta$ and $\Delta_{G}$ are
    homomorphism, is}
    &=\int_{H} \int_{H} \pi(b(h),x) \pi(c(x^{-1}\cdot (h^{-1}k)),
      y)\xi(y^{-1}\cdot x^{-1}\cdot s(k))
    \\
    &\hskip1in \Delta(k,xy)^{-\half}\Delta_{G}(xy)^{\half}
      \,d\lambdah^{v}(k) \,d 
      \lambdah^{v}(h) 
    \\
    &=\int_{H}\Bigl(\int_{H} \pi\bigl(b(h)x\cdot c(x^{-1}\cdot
      (h^{-1}k)\bigr) \,d\lambdah^{v}(h) \Bigr) \xi(y^{-1}\cdot
      x^{-1}\cdot s(k))
    \\
    &\hskip1in \Delta(k,xy)^{-\half}
      \Delta_{G}(xy) ^{\half} \,d\lambdah^{v}(k) \\
    &=\int_{H} \pi(d(h),xy) \xi((xy)^{-1}\cdot s(h))
      \Delta(k,xy)^{-\half}
      \Delta_{G}(xy)^{-\half}
      \,d\lambdah^{v}(k) \\
    &=\bar\pi_{xy}(d)\xi(v).
  \end{align}
Thus $\Pi(b,x)\Pi(c,y)=\Pi((b,x)(c,y))$ as required.

Another computation shows that $\Pi((b,x)^{*})=\Pi((b,x))^{*}$.  

Since $\pi$ is Borel, it is not hard to check that $\Pi$ is as well.
This completes the proof.
\end{proof}


\begin{bibdiv}
\begin{biblist}

\bib{AR}{book}{
      author={Anantharaman-Delaroche, C.},
      author={Renault, J.},
       title={Amenable groupoids},
      series={Monographies de L'Enseignement Math{\'e}matique [Monographs of
  L'Enseignement Math{\'e}matique]},
   publisher={L'Enseignement Math{\'e}matique},
     address={Geneva},
        date={2000},
      volume={36},
        note={With a foreword by Georges Skandalis and Appendix B by E.
  Germain},
}

\bib{bprrwzappa}{article}{
      author={Brownlowe, N.},
      author={Pask, D.},
      author={Ramagge, J.},
      author={Robertson, D.},
      author={Whittaker, M.~F.},
       title={Zappa-{S}z\'{e}p product groupoids and {$C^*$}-blends},
        date={2017},
     journal={Semigroup Forum},
      volume={94},
      number={3},
       pages={500\ndash 519},
}

\bib{bm:xx16}{article}{
      author={Buss, A.},
      author={Meyer, R.},
      title={Iterated crossed products for groupoid fibrations},
      pages={preprint},
        date={2016},
        note={(arXiv:1604.02015 [math.OA])},
}

\bib{dlzappa}{article}{
      author={Duwenig, A.},
      author={Li, B.},
       title={{Zappa-Sz\'ep product of Fell bundle}},
       date={2020},
       pages = {preprint},
        note={(arXiv:2006.03128v2) [math.OA])},
}

\bib{effhah:mams67}{book}{
      author={Effros, E.~G.},
      author={Hahn, F.},
       title={Locally compact transformation groups and {$C^*$}-algebras},
      series={Memoirs of the American Mathematical Society, No. 75},
   publisher={American Mathematical Society},
     address={Providence, R.I.},
        date={1967},
      review={\MR{37 \#2895}},
}

\bib{felldoran}{book}{
      author={Fell, J. M.~G.},
      author={Doran, R.~S.},
       title={Representations of {$*$}-algebras, locally compact groups, and
  {B}anach {$*$}-algebraic bundles. {V}ol. 1},
      series={Pure and Applied Mathematics},
   publisher={Academic Press Inc.},
     address={Boston, MA},
        date={1988},
      volume={125},
        ISBN={0-12-252721-6},
        note={Basic representation theory of groups and algebras},
      review={\MR{90c:46001}},
}

\bib{goe:thesis}{thesis}{
      author={Goehle, G.},
       title={Groupoid crossed products},
        type={{Ph.D.} Dissertation},
        date={2009},
        note={(arXiv:0905.4681 [math.OA])},
      }

\bib{hkqwstab}{article}{
      author={Hall, L.},
      author={Kaliszewski, S.},
      author={Quigg, J.},
      author={Williams, D.~P.},
       title={{Groupoid semidirect product Fell bundles II ---
           principal actions stablization}},
       date = {2021},
        note={(arXiv:2105.02280  [math.OA])},
}

\bib{husemoller}{book}{
      author={Husemoller, D.},
       title={Fibre bundles},
     edition={Third},
      series={Graduate Texts in Mathematics},
   publisher={Springer-Verlag},
     address={New York},
        date={1994},
      volume={20},
}

\bib{kmqw1}{article}{
      author={Kaliszewski, S.},
      author={Muhly, P.~S.},
      author={Quigg, J.},
      author={Williams, D.~P.},
       title={Coactions and {F}ell bundles},
        date={2010},
     journal={New York J. Math.},
      volume={16},
       pages={315\ndash 359},
}

\bib{kmqw2}{article}{
      author={Kaliszewski, S.},
      author={Muhly, P.~S.},
      author={Quigg, J.},
      author={Williams, D.~P.},
       title={{Fell bundles and imprimitivity theorems}},
        date={2013},
     journal={M\"unster J. Math.},
      volume={6},
       pages={53\ndash 83},
}

\bib{mw:fell}{article}{
      author={Muhly, P.~S.},
      author={Williams, D.~P.},
       title={Equivalence and disintegration theorems for {F}ell bundles and
  their ${C}^*$-algebras},
        date={2008},
     journal={Dissertationes Mathematicae},
      volume={456},
       pages={1\ndash 57},
}

\bib{muhwil:nyjm}{book}{
      author={Muhly, P.~S.},
      author={Williams, D.~P.},
       title={Renault's equivalence theorem for groupoid crossed products},
      series={NYJM Monographs},
   publisher={State University of New York at Albany},
     address={Albany, NY},
        date={2008},
      volume={3},
        note={Available at http://nyjm.albany.edu:8000/m/2008/3.htm},
}

\bib{RW}{book}{
      author={Raeburn, I.},
      author={Williams, D.~P.},
       title={{Morita equivalence and continuous-trace $C^*$-algebras}},
      series={Mathematical Surveys and Monographs},
   publisher={American Mathematical Society},
     address={Providence, RI},
        date={1998},
      volume={60},
}

\bib{simwil:ijm13}{article}{
      author={Sims, A.},
      author={Williams, D.~P.},
       title={Amenability for {F}ell bundles over groupoids},
        date={2013},
        ISSN={0019-2082},
     journal={Illinois J. Math.},
      volume={57},
      number={2},
       pages={429\ndash 444},
         url={http://projecteuclid.org/euclid.ijm/1408453589},
      review={\MR{3263040}},
}

\bib{danacrossed}{book}{
      author={Williams, D.~P.},
       title={Crossed products of {$C{\sp \ast}$}-algebras},
      series={Mathematical Surveys and Monographs},
   publisher={American Mathematical Society},
     address={Providence, RI},
        date={2007},
      volume={134},
}

\bib{wil:toolkit}{book}{
      author={Williams, D.~P.},
       title={A tool kit for groupoid {\cs}-algebras},
      series={Mathematical Surveys and Monographs},
   publisher={American Mathematical Society},
     address={Providence, RI},
        date={2019},
      volume={241},
}

\end{biblist}
\end{bibdiv}

\end{document}